\documentclass[11pt]{amsart}
\usepackage[english]{babel}
\usepackage[utf8]{inputenc}
\usepackage{graphicx}
\usepackage{xcolor}
\usepackage{longtable}
\usepackage{float}
\usepackage[all]{xy}
\usepackage{amssymb,amscd}
\usepackage{tikz}
\usepackage{csquotes}
\usepackage{imakeidx}
\usepackage{appendix}
\usepackage{tikz-cd}
\usepackage{enumitem}
\usepackage[colorlinks=true,linkcolor=blue,citecolor=blue,urlcolor=magenta,pagebackref=true]{hyperref}
\usepackage{calc}
\usepackage{mathtools}
\usepackage{comment}
\usepackage{cleveref}

\numberwithin{equation}{section}

\newtheorem{theorem}{Theorem}[section]
\newtheorem{proposition}[theorem]{Proposition}
\newtheorem{lemma}[theorem]{Lemma}
\newtheorem{corollary}[theorem]{Corollary}

\theoremstyle{definition}

\theoremstyle{remark}
\newtheorem{remark}[theorem]{Remark}

\newcommand{\Gm}{\mathbb G_{\mathrm m}}
\newcommand{\GO}{\operatorname{GO}}
\newcommand{\GOp}{\operatorname{GO}^{+}}
\newcommand{\PGOp}{\operatorname{PGO}^{+}}
\newcommand{\SO}{\operatorname{SO}}
\newcommand{\Spin}{\operatorname{Spin}}
\newcommand{\Cor}{\operatorname{cor}}
\newcommand{\End}{\operatorname{End}}

\title{The norm principle for extended Clifford groups}
\author{Federico Scavia}
\address{CNRS, Institut Galil\'ee, Universit\'e Sorbonne Paris Nord, 99 avenue Jean-Baptiste Cl\'ement, 93430 Villetaneuse, France}
\email{scavia@math.univ-paris13.fr}
\date{September 24, 2026}
\subjclass[2020]{Primary 20G15; Secondary 11E04, 11E57, 14L15}
\keywords{Norm principle, quadratic form, spin group, extended Clifford group, $R$-equivalence, maximal torus}

\begin{document}
	
	\begin{abstract}
We prove the norm principle for the extended Clifford group $\Omega(q)$ of every nondegenerate quadratic form $q$ of even dimension at least $4$ over an arbitrary field $F$ of characteristic different from $2$, for all finite separable field extensions $L/F$. The main new ingredient is a factorization theorem for proper similitudes: for a quadratic extension $L/F$ and $f\in \mathrm{GO}^+(q)(L)$, there exist $u\in \mathrm{SO}(q)(L)$, a maximal $F$-torus $S\subset \mathrm{GO}^+(q)$, and $s\in S(L)$ such that $f=us$.
	\end{abstract}
	
	\maketitle

	\section{Introduction}
	
	Let $F$ be a field, let $G$ be a reductive $F$-group, let $T$ be an
	$F$-torus, and let $\varphi\colon G\to T$ be an $F$-homomorphism.
	Given a finite separable extension $L/F$, the \emph{norm principle} for
	$\varphi$ over $L/F$ is the inclusion
	\[
	N_{T,L/F}(\varphi(G(L)))\subset \varphi(G(F)),
	\]
	where $N_{T,L/F}\colon T(L)\to T(F)$ denotes the norm map. We say that
	the norm principle holds for $G$ over $L/F$ if it holds for every
	$F$-homomorphism from $G$ to an $F$-torus.
	
	Barquero--Merkurjev \cite[Theorem~1.1]{BM00} proved the norm principle
	for reductive groups whose Dynkin diagram has no component of type
	$D_n$ for $n\geq4$, $E_6$, or $E_7$. Thus, among the classical groups, the remaining case is type $D_n$. For groups arising from even-dimensional quadratic forms, the key case is the extended Clifford group $\Omega(q)$ of a nondegenerate quadratic form $q$; see
	\cite[\S2.1]{BCM2019}.
	
	The norm principle for $\Omega(q)$ was previously known in a number of
	important cases, including the case when $q$ is isotropic and the case when $F$ has small virtual cohomological dimension; see below for precise references. The anisotropic case, however, remained open. The purpose of this paper is to settle it.
	
	\begin{theorem}\label{thm:main}
		Let $F$ be a field of characteristic different from $2$, and let $q$ be
		a nondegenerate quadratic form of even dimension at least $4$ over $F$.
		For every finite separable extension $L/F$, the norm principle holds for
		the extended Clifford group $\Omega(q)$ over $L/F$.
	\end{theorem}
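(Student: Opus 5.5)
We reduce to quadratic extensions, then use the factorization theorem for proper similitudes announced in the abstract.

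\textbf{Step 1: reduction to the multiplier-type map.} By the standard reduction from \cite{BM00} and \cite[\S2.1]{BCM2019}, it suffices to prove the norm principle for a single "universal" homomorphism. This is the map from $\Omega(q)$ to its maximal torus quotient $\Omega(q)/[\Omega,\Omega]$, and every homomorphism to a torus factors through it. Concretely, this map is the pair $(\mu,\tilde\mu)$, where $\mu$ is the multiplier and $\tilde\mu$ is the "spinor-type" character. Via the exact sequence $1\to\mu_2\to\Omega(q)\to\PGOp(q)\to1$ and the description of the image, the statement translates into an equivalent statement about $\GOp(q)$. For $f\in\GOp(q)(L)$ with multiplier $\mu(f)$, the norm of the associated class lies in the image over $F$.

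\textbf{Step 2: reduction to quadratic extensions.} Next we reduce to the case where $L/F$ is quadratic. Norm principles are stable under passage to the separable closure of a Sylow-type extension. A standard restriction–corestriction argument (coprime degree) reduces us to $[L:F]$ a power of $2$ over a field with no odd-degree extensions. Then $L/F$ is a tower of quadratic extensions, and norms compose along the tower. Hence it suffices to treat quadratic $L/F$, applied over each intermediate field with $q$ extended.

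\textbf{Step 3: the quadratic case via factorization.} For quadratic $L/F$ and $f\in\GOp(q)(L)$, the factorization theorem gives $f=us$. Here $u\in\SO(q)(L)$, $S\subset\GOp(q)$ is a maximal $F$-torus, and $s\in S(L)$.

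For the torus factor $s$: the restriction of the universal map to the preimage $\tilde S\subset\Omega(q)$ of $S$ is a homomorphism of $F$-tori. Norm principles hold trivially for tori, because the norm commutes with torus homomorphisms. So $N_{L/F}(\varphi(\tilde s))=\varphi(N_{\tilde S,L/F}(\tilde s))\in\varphi(\Omega(q)(F))$.

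For the factor $u\in\SO(q)(L)$: its lift lies in the image of $\Gamma^+(q)(L)$. There the relevant character is the spinor norm, and the norm principle for the spinor norm is known classically, by Knebusch's norm principle. So the corresponding norm is also realized over $F$.

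Multiplicativity of $\varphi$ and of the norm map then combine the two contributions. There is one obstacle here. Lifting $f=us$ to $\Omega(q)(L)$ introduces a sign/$\mu_2$ ambiguity, and the lifts of $u$ and $s$ need not multiply to the chosen lift of $f$ exactly. This is absorbed because $\varphi(-1)$ has norm $\varphi(-1)^2=\varphi(1)$, or more carefully, because $\varphi(\mu_2(L))$ consists of $F$-points of the image of the center.

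\textbf{Main obstacle.} The essential difficulty is the factorization theorem itself, which is the paper's new ingredient. It requires finding, for a given similitude over $L$, an $F$-rational maximal torus that absorbs the "non-isometric part." I would attack it as follows. Write $L=F(\sqrt d)$. Use that $f$ composed with its Galois conjugate controls the obstruction. Then construct $S$ as the centralizer of a suitably chosen $F$-rational semisimple regular element, obtained from $f$ by modifying with isometries. Since this theorem is stated earlier in the paper, I can take it as given, and the bookkeeping in Steps 1 and 3 is the main remaining work.
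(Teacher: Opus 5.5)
Your argument is correct in its main line once a few details are repaired (see below). It takes a genuinely different route from the paper for the factor $u\in\SO(q)(L)$.

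\textbf{Comparison.} The paper works with the cohomological criterion of Lemma~\ref{lem:bcm-equivalence}(ii). There, $\overline u\in\PGOp(q)(L)$ is $R$-trivial because $\SO(q)$ is rational (Remark~\ref{rem:cayley}), and Gille's theorem (Theorem~\ref{thm:gille}, via Corollary~\ref{cor:r-torus-corestriction}) controls $\Cor_{L/F}(\delta_L(\overline u))$. You instead work with characters of $\Omega(q)$ directly. You lift $u$ to $\tilde u\in\Gamma^+(q)(L)\subset\Omega(q)(L)$. Since $\Gamma^+(q)/\Spin(q)\simeq\Gm$ via the spinor norm, every $F$-homomorphism $\varphi\colon\Omega(q)\to T$ restricts on $\Gamma^+(q)$ to $\psi\circ\mathrm{sn}$ for some $F$-homomorphism $\psi\colon\Gm\to T$. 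Knebusch's principle then gives $x\in\Gamma^+(q)(F)$ with $\mathrm{sn}(x)=N_{L/F}(\mathrm{sn}(\tilde u))$, so $N_{T,L/F}(\varphi(\tilde u))=\varphi(x)$. In the paper's language: $\delta_L(\overline u)$ is the image of the spinor-norm class of $u$ under $\mu_2\hookrightarrow Z$, and Knebusch computes its corestriction. The torus factor is treated the same way in both.

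Your route is more elementary. It replaces $R$-equivalence, the Cayley parametrization and Gille's theorem by a classical quadratic-form result. It also shows that, given the factorization, Knebusch's principle plus the trivial norm principle for tori suffice. The paper's route stays inside the cohomological framework in which the quadratic reduction is formulated. Gille's theorem would also absorb any $R$-trivial factor, not only one lifting to $\SO(q)(L)$.

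\textbf{Repairs.} None of these affects the strategy.

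First, the kernel of $\Omega(q)\to\PGOp(q)$ is not $\mu_2$. It is the center of $\Omega(q)$, the quasi-trivial torus $R_{Z'/F}(\Gm)$, where $Z'$ is the center of the even Clifford algebra. So the discrepancy in your ``obstacle'' is an arbitrary element of $(Z'\otimes_F L)^\times$, and the remark about $\varphi(-1)$ does not address it. The issue disappears if you never lift $\overline s$. Given $\omega\in\Omega(q)(L)$ and $\tilde u$ as above, put $\tilde s\coloneqq\tilde u^{-1}\omega$. It maps to $\overline s$, so it lies in the preimage $\tilde S$ of the image of $S$ in $\PGOp(q)$, which is a maximal $F$-torus of $\Omega(q)$. (It is not ``the preimage of $S$'', since $\Omega(q)$ does not map to $\GOp(q)$.)

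Second, the coprime-degree step in Step~2 is circular as phrased. The norm induces a map $T(F')/\varphi(\Omega(q)(F'))\to T(F)/\varphi(\Omega(q)(F))$ only if the norm principle already holds for $F'/F$. You need an independent input. One option is that $H^1(F,\Spin(q))\to H^1(F',\Spin(q))$ has trivial kernel for $[F':F]$ odd, which follows from Springer's theorem and Knebusch's principle. Alternatively, simply cite Theorem~\ref{thm:quadratic-reduction} as the paper does.

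Third, Theorem~\ref{thm:toral-replacement} assumes $F$ is infinite. With the paper's reduction, finite $F$ must be handled separately; the paper uses Lang's theorem, $H^1(F,\Spin(q))=1$. If you instead pass to a field with no odd-degree extensions, that field is infinite. You then need a limit argument to descend to a finite odd-degree subextension.
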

	
	Theorem~\ref{thm:main} has consequences for the norm principle for more general reductive groups and for Serre's injectivity question; see Corollaries~\ref{cor:reductive-norm-principle} and \ref{cor:serre-injectivity}.
	
	We place Theorem~\ref{thm:main} in the context of the classical
	norm principles for quadratic forms. Scharlau's norm principle
	\cite[Chapter~VII, \S4.3]{Lam2005} asserts that the norm of a similarity
	factor of $q_L$ is again a similarity factor of $q$; equivalently,
	\[N_{L/F}(\mu(\GO(q)(L)))\subset \mu(\GO(q)(F)),\]
	where $\mu\colon\GO(q)\to\Gm$ is the multiplier map. Similarly,
	Knebusch's norm principle asserts that norms of spinor norms of $q_L$
	are spinor norms of $q$. In terms of the even Clifford group $\Gamma^+(q)$, this is
	the norm principle for the spinor-norm homomorphism
	$\Gamma^+(q)\to\Gm$. The norm principle for $\Omega(q)$ expresses a
	subtle compatibility between these two classical norm principles; see
	\cite[Theorem~2.5]{BCM2019}.
	
	Before the present work, Theorem~\ref{thm:main} was known in some special cases:
	\begin{itemize}
		\item[--] when $F$ is a number field, it follows from work of Gille \cite{Gille1997};
		\item[--] when $\dim(q)=4$ or $6$, by Barquero--Merkurjev \cite{BM00};
		\item[--] when $q$ is isotropic, by Bhaskhar--Chernousov--Merkurjev \cite{BCM2019};
		\item[--] over fields of virtual cohomological dimension at most $2$, again  by Bhaskhar--Chernousov--Merkurjev \cite{BCM2019}.
	\end{itemize}
	Moreover, Bhaskhar--Chernousov--Merkurjev reduced the norm principle over a complete discretely valued field to the corresponding problem for quadratic forms over finite extensions of its residue field.
	
	The key new ingredient in the proof of Theorem~\ref{thm:main} is the following factorization result for proper similitudes over quadratic extensions.
	
	\begin{theorem}\label{thm:toral-replacement}
		Let $F$ be an infinite field of characteristic different from $2$, let $L/F$ be a quadratic extension, and let $q$ be a nondegenerate quadratic form of dimension $2n$ over $F$. For every $f\in\GOp(q)(L)$, there exist $u \in\SO(q)(L)$, a maximal $F$-torus $S\subset\GOp(q)$ and an element $s\in S(L)$ such that $f=us$.
	\end{theorem}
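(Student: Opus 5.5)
Let $\sigma$ denote the nontrivial automorphism of $L/F$, and let $\sigma$ act on $\GOp(q)(L)$ coefficientwise. The idea is to find a maximal $F$-torus $S$ whose $L$-points contain an element $s$ with the same image as $f$ in $\PGOp(q)(L)$ modulo $\SO(q)(L)$. Since $\SO(q)(L)$ is normal in $\GOp(q)(L)$, it suffices to find $s\in S(L)$ with $\mu(s)=\mu(f)$ and such that $f s^{-1}$ is proper, which is automatic inside $\GOp$. So the condition really reduces to a statement about multipliers. We need an $F$-torus $S\subset\GOp(q)$ with $\mu(S(L))\ni\mu(f)$, where $\lambda:=\mu(f)\in L^\times$ is a similarity factor of $q_L$.

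\textbf{Step 1.} Write $\lambda\in L^\times$ with $q_L\simeq\lambda q_L$. Maximal $F$-tori of $\GOp(q)$ correspond to étale algebras $E$ of degree $2n$ with involution $\tau$, with fixed algebra $E_0$ of degree $n$, together with an embedding $(E,\tau)\hookrightarrow(\End(V),\mathrm{ad}_q)$. Such an embedding exists exactly when $q\simeq\mathrm{tr}_{E/F}(\langle a\rangle_E)$ for a suitable $a\in E_0^\times$, by the standard description of tori. The torus is $S=\{x\in R_{E/F}\Gm : x\tau(x)\in \Gm\}$, and its multiplier is $x\mapsto x\tau(x)$. Hence $\mu(S(L))=\{c\in L^\times : c\in N_{E_L/E_{0,L}}(E_L^\times)\}$.

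\textbf{Step 2.} We therefore want $E=E_0(\sqrt{d})$, with $d\in E_0^\times$ chosen so that three things hold: $q$ is a transfer of a hermitian form over $E$; the given $\lambda$ is a norm from $E_L$ to $E_{0,L}$; and then $f$ can be adjusted as explained at the start. I would first diagonalize $q=\langle a_1,\dots,a_{2n}\rangle$ and use the infinitude of $F$ to choose generic $E_0$. Since $\lambda q_L\simeq q_L$, the form $q_L$ contains subforms of type $\langle 1,-\lambda\rangle$-multiples; more precisely, $q_L\otimes\langle1,-\lambda\rangle$ is hyperbolic. Scharlau/Knebusch-type transfer arguments should let me produce a decomposition of $q$ over $F$ as a transfer $\mathrm{tr}_{E_0/F}(b\langle1,-d\rangle)$ with $-d$ adapted to $\lambda$, for example with $d$ satisfying $N_{E_0L/E_0}$-compatibility. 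Making $\lambda$ a norm of $E_L/E_{0,L}$ while keeping $E$ defined over $F$ is the main obstacle. The reason is that $\lambda$ lives in $L$, not in $F$. I would first try $E_0=F[t]/(p)$ with $p$ a generic polynomial, and $d$ chosen so that $E_L=E_{0,L}(\sqrt{d})$ contains an element of norm $\lambda$. This uses that $\langle1,-d\rangle$ represents $\lambda$ over $E_{0,L}$, which reduces via Springer/Scharlau to a representation problem solvable over infinite $F$ by a density argument on the variety of such $(E_0,d)$. That variety is rational, and $L$-points arise from the hyperbolicity of $q_L\otimes\langle1,-\lambda\rangle$. One may need a Weil-restriction trick: consider the $F$-variety $R_{L/F}$ of pairs $(x,T)$ with $T$ a maximal torus and $x\in T$ with $\mu(x)=\lambda$. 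If this variety is rational, or at least has dense $F$-points with $L$-points lifting, then we get an $F$-point.

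\textbf{Step 3.} Given $s\in S(L)$ with $\mu(s)=\mu(f)$, set $u=fs^{-1}$. Then $u$ lies in $\GOp(q)(L)$ with trivial multiplier, so $u\in\SO(q)(L)$, and $f=us$.

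The hard core is Step 2: constructing an $F$-rational torus realizing an $L$-rational multiplier. I expect to exploit the quadratic nature of $L$, writing $\lambda=\alpha+\beta\sqrt{c}$, and a genericity argument over the infinite field $F$.
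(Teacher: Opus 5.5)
Your reduction is correct and matches the paper: it suffices to find a maximal $F$-torus $S\subset\GOp(q)$ and $s\in S(L)$ with $\mu(s)=\lambda$, since then $u=fs^{-1}\in\SO(q)(L)$. Your description of $S$ through $(E,\tau)$ with multiplier $z\mapsto z\tau(z)$ also agrees with Lemma~\ref{lem:etale-similitude-torus}. But Step~2 is the whole content of the theorem, and it is not a proof. It only lists what you would like to hold (``should let me produce'', ``I would first try'', ``if this variety is rational''), and the proposed mechanism cannot work as stated. For a candidate $E=E_0(\sqrt d)$ there are two requirements. The first is that $(E,\tau)$ embed in $(\End_F(V),\mathrm{ad}_q)$, i.e.\ $q\simeq\mathrm{tr}_{E/F}(\langle a\rangle)$ for some $a\in E_0^\times$. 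The second is that $\lambda\in N_{E_L/E_{0,L}}(E_L^\times)$. Both are arithmetic conditions, not Zariski-open ones, so density of $F$-points on a rational parameter space (of pairs $(E_0,d)$, or of maximal tori) gives nothing. Your variety of pairs $(T,x)$, with $T$ defined over $F$, $x\in T(L)$ and $\mu(x)=\lambda$, is no better: an $F$-point on it is literally the conclusion, and you give no argument that it is rational or even nonempty. The obstruction you name yourself is never resolved: $d$ must lie in $E_0$, while $\lambda=x^2-dy^2$ has to be solved over $E_0\otimes_F L$.

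The missing idea is not to pick the torus first and then solve a norm equation in it. Instead, build the torus out of an element that already has multiplier $\lambda$.
\begin{enumerate}
\item If $\lambda\in L^{\times2}$, take $s$ scalar.
\item Otherwise Vinroot's theorem gives $A\in\GO(q)(L)$ with $A^*=A$ and $A^2=\lambda I$. Put $P=A\sigma(A)$ and $\nu=\lambda\sigma(\lambda)\in F^\times$. Then $\sigma(P)=P^*=\nu P^{-1}$, so $L[P]$ is stable under both $\sigma$ and $\mathrm{ad}_q$.
\item Replace $A$ by $gAg^{-1}$ with $g\in\SO(q)(L)$. The condition that $\chi_P$ be separable and prime to $t^2-\nu$ \emph{is} Zariski-open on the rational $F$-variety $R_{L/F}(\SO(q)_L)$. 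It is nonempty by an explicit computation over $\overline F$. This is the one place where your density idea genuinely applies.
\item Set $E=L[P]^\sigma$. It is an $\mathrm{ad}_q$-stable \'etale $F$-subalgebra of $\End_F(V)$ of dimension $2n$ with $\dim_F E^\tau=n$. Because $E$ sits inside $\End_F(V)$ from the start, the embedding problem disappears.
\item Transporting $\tau$ through an $E$-linear isomorphism $E\simeq V$ gives $r\in\operatorname{O}(q)(F)$ with $rzr^{-1}=\tau(z)$ for $z\in E$.
\item Since also $APA^{-1}=\sigma(A)A=P^*$, conjugation by $A$ induces $\mathrm{ad}_q$ on $L[P]$, as conjugation by $r$ does. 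Hence $s=Ar$ centralizes $L[P]$.
\item Because $L[P]$ is self-centralizing, $s\in L[P]^\times=(E\otimes_F L)^\times$, and $s^*s=r^*A^*Ar=\lambda I$.
\end{enumerate}
The norm equation you were trying to arrange is thus solved automatically by the construction.
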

	
	We indicate how Theorem~\ref{thm:toral-replacement} implies Theorem~\ref{thm:main}. By the reduction theorem of Bhaskhar--Chernousov--Merkurjev (Theorem~\ref{thm:quadratic-reduction}), we may assume that $L/F$ is a quadratic extension. Another result of Bhaskhar--Chernousov--Merkurjev (Lemma~\ref{lem:bcm-equivalence}) translates the obstruction to the norm principle in terms of the central isogeny
	\[
	\Spin(q)\longrightarrow\PGOp(q).
	\]
	After lifting an element of $\PGOp(q)(L)$ to $f\in\GOp(q)(L)$, Theorem~\ref{thm:toral-replacement} gives a factorization $f=us$ with $u\in\SO(q)(L)$ and $s$ contained in a maximal $F$-torus. Since $\SO(q)$ is $F$-rational, the factor $u$ is $R$-trivial, and hence it is handled using Gille's Theorem~\ref{thm:gille}; the term $s$ is handled by the usual compatibility of corestriction with the norm map for tori. These two ingredients together imply the conclusion; see Corollary~\ref{cor:r-torus-corestriction}.
	
	We now outline the proof of Theorem~\ref{thm:toral-replacement}. Let $f\in\GOp(q)(L)$ and set $\lambda\coloneqq\mu(f)$. It is enough to find a maximal $F$-torus $S\subset\GOp(q)$ and an element $s\in S(L)$ with $\mu(s)=\lambda$, since then $u\coloneqq fs^{-1}$ belongs to $\SO(q)(L)$. If $\lambda=c^2$ in $L$, we may simply take $s=cI$. From now on, assume that $\lambda$ is not a square.
	
	We construct $S$ from an \'etale subalgebra of $\End_F(V)$ stable under the adjoint involution. Since $\lambda$ is a multiplier, Lemma~\ref{lem:self-adjoint-root} gives a self-adjoint similitude $A\in\GO(q)(L)$ of multiplier $\lambda$, that is, such that $A^2=A^*A=\lambda I$. Let $\sigma$ be the nontrivial element of $\mathrm{Gal}(L/F)$ and define
	\[
	P\coloneqq A\sigma(A),\qquad\nu\coloneqq\lambda\sigma(\lambda)\in F^\times.
	\]
	Then $\sigma(P)=P^*=\nu P^{-1}$ and, after replacing $A$ by a suitable $\SO(q)(L)$-conjugate,
	Proposition~\ref{prop:regular-product} allows us to assume that $P$ is
	regular semisimple and that its characteristic polynomial $\chi_P(t)$ is prime to
	$t^2-\nu$. Proposition~\ref{prop:spectral-torus}, applied to $P$, now gives an
	\'etale $F$-algebra $E\coloneqq L[P]^\sigma\subset\End_F(V)$
	with involution $\tau=(\mathrm{ad}_q)|_E$, together with a maximal
	$F$-torus $S\subset\GOp(q)$: for every $k$-algebra $R$, 
	\[S(R)=\{z\in(E\otimes_F R)^\times:z\tau(z)\in R^\times\}
	\subset\GOp(q).\]
	 It also gives an isometry
	$r\in\operatorname O(q)(F)$ whose conjugation action on $E$, and hence
	on $L[P]$, is the adjoint involution.
	
	On the other hand, conjugation by $A$ also induces the adjoint
	involution on $L[P]$. Hence $s\coloneqq Ar$ centralizes $L[P]$. Since $P$ is regular
	semisimple, $L[P]$ is self-centralizing, so $s\in L[P]^\times$.
	Moreover,
	\[s^*s=r^*A^*Ar=\lambda I.\]
	Therefore $s\in S(L)$ and $\mu(s)=\lambda$. Thus $fs^{-1}\in\SO(q)(L)$, as desired.
	
	The paper is organized as follows. In Section~\ref{sec:prelim}, we recall basic facts about quadratic forms, orthogonal and spin groups, and the extended Clifford group. In Section~\ref{sec:norm-prelim}, we review the cohomological formulation of the norm principle, the reduction to quadratic extensions, and the relevant transfer result for $R$-equivalence. In Section~\ref{sec:etale-tori}, we describe the maximal tori used in the proof of Theorem~\ref{thm:toral-replacement}. Section~\ref{sec:self-adjoint} establishes the results we need on self-adjoint similitudes and generic products. Finally, in Section~\ref{sec:toral-replacement}, we prove Theorem~\ref{thm:toral-replacement} and then deduce Theorem~\ref{thm:main}.
	
	\subsection*{Notation}
	
	Let $F$ be a field. An $F$-variety is a separated $F$-scheme of finite type. For an $F$-torus $T$ and a finite separable field extension $L/F$, we let $N_{T,L/F}\colon T(L)\to T(F)$ be the norm map.
	
	\section*{Acknowledgments}
	
	I thank Alexander Merkurjev for introducing me to this problem and for generously sharing his insights with me. 
	
	\section{Preliminaries on quadratic forms and orthogonal groups}\label{sec:prelim}
	
	\subsection{Quadratic spaces and similitudes}
	
	Let $F$ be a field of characteristic not equal to $2$, let $V$ be a finite-dimensional vector space over $F$, and let $q\colon V\to F$ be a nondegenerate quadratic form. Its associated symmetric bilinear form is
	\[b\colon V\times V\to F,\qquad b(x,y)=\frac{q(x+y)-q(x)-q(y)}{2}.\]
	For $a\in\End_F(V)$, its adjoint $a^*$ is
	characterized by
	\[
	b(ax,y)=b(x,a^*y)\qquad \forall x,y\in V.
	\]
	Recall that $a$ is self-adjoint if $a^*=a$, and skew-adjoint if $a^*=-a$. We write $\operatorname{ad}_q$ for the adjoint involution on $\End_F(V)$ associated with $q$, so that $\operatorname{ad}_q(a)=a^*$ for all $a\in \End_F(V)$.
	
	A similitude of $q$ is an element $g\in\operatorname{GL}(V)$
	such that $q(gx)=\mu(g)q(x)$ for some $\mu(g)\in F^\times$ and all
	$x\in V$, or equivalently $g^*g=\mu(g)I$. The scalar $\mu(g)$ is called the multiplier of $g$. The similitude group is	denoted by $\GO(q)$, and its subgroup of isometries by
	$\operatorname O(q)=\ker(\mu)$.
	
	Suppose that $\dim V=2n$. Taking determinants gives $\det(g)^2=\mu(g)^{2n}$, so $\det(g)=\pm\mu(g)^n$. The group of proper similitudes and the special orthogonal group are
	\[
	\GOp(q)\coloneqq\{g\in\GO(q):\det(g)=\mu(g)^n\},\qquad\SO(q)=\operatorname O(q)\cap\GOp(q).
	\]
	The scalar transformations form a central subgroup
	$\Gm\subset\GOp(q)$. We write $\PGOp(q)\coloneqq\GOp(q)/\Gm$.
	Hilbert's Theorem~90 implies that the map $\GOp(q)(F)\to\PGOp(q)(F)$ is surjective.
	
	\subsection{Spin groups}\label{subsec:spin}
	
	Assume that $V$ has even dimension at least $4$. We have a central short exact sequence 
	\begin{equation}\label{eq:spin-central-isogeny}
		1\longrightarrow Z\longrightarrow\Spin(q)\longrightarrow\PGOp(q)\longrightarrow 1,\qquad Z=Z(\Spin(q)).
	\end{equation}
	Here $Z$ is a finite commutative \'etale group of $F$-order $4$; see
	\cite[\S23]{KMRT1998}. For every extension $K/F$, let
	\[
	\delta_K\colon\PGOp(q)(K)\longrightarrow H^1(K,Z),
	\qquad
	\alpha_K\colon H^1(K,Z)\longrightarrow H^1(K,\Spin(q))
	\]
	be the connecting map and the map induced by the inclusion of $Z$,
	respectively. Since $Z$ is central in $\Spin(q)$, the connecting map $\delta_K$ is a group
	homomorphism; see \cite[Chapter~I, \S5.6, Corollary~2]{SerreGC}. The Galois cohomology sequence associated to \eqref{eq:spin-central-isogeny} gives
	\begin{equation}\label{eq:delta-alpha}
		\delta_K(\PGOp(q)(K))=\ker\alpha_K;
	\end{equation}
	see \cite[Chapter~I, \S5.7, Proposition~43]{SerreGC}.
	
	We denote by $\Omega(q)$ the extended Clifford group associated with $q$; see \cite[Definition~(13.18)]{KMRT1998}.
	
	\section{Preliminaries on the norm principle}\label{sec:norm-prelim}
	
	Let $G$ be a reductive $F$-group, let $T$ be an $F$-torus, and let
	$\varphi\colon G\to T$ be an $F$-homomorphism. For a finite separable
	extension $L/F$, the norm principle for $\varphi$ over $L/F$ is
	the inclusion
	\[
	N_{T,L/F}(\varphi(G(L)))\subset\varphi(G(F)).
	\]
	We say that the norm principle holds for $G$ over $L/F$ if the norm principle holds for every $F$-homomorphism from $G$ to an $F$-torus.
	
	The norm principle for $\Omega(q)$ admits the following equivalent descriptions.	
	
	\begin{lemma}[Bhaskhar--Chernousov--Merkurjev]
		\label{lem:bcm-equivalence}
		Let $q$ be a nondegenerate quadratic form over $F$ of even dimension
		at least $4$, and let $L/F$ be a finite separable extension. The following conditions are equivalent:
		\begin{enumerate}[label=\textup{(\roman*)}]
			\item The norm principle holds for $\Omega(q)$ over $L/F$.
			\item One has $\Cor_{L/F}(\delta_L(\PGOp(q)(L)))\subset\delta_F(\PGOp(q)(F))$.
			\item One has $\Cor_{L/F}(\ker\alpha_L)\subset\ker\alpha_F$.
		\end{enumerate}
	\end{lemma}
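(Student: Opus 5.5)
The plan is to go through the characters of $\Omega(q)$ and the standard description of its center. Recall from \cite[\S13]{KMRT1998} that $\Omega(q)$ is a reductive group whose derived subgroup is $\Spin(q)$. Its maximal torus quotient $\Omega(q)/\Spin(q)$ is identified with $R_{Z(q)/F}(\Gm)$, where $Z(q)$ is the center of the even Clifford algebra (the discriminant algebra). The relevant map is the canonical homomorphism $\underline{\mu}\colon\Omega(q)\to R_{Z(q)/F}(\Gm)$. Every $F$-homomorphism $\Omega(q)\to T$ to a torus factors through the abelianization, i.e. through $\underline{\mu}$. So the norm principle for $\Omega(q)$ is equivalent to the norm principle for $\underline{\mu}$ alone. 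This is the standard reduction of \cite[Theorem~1.1]{BM00}, and it relies on the compatibility of norms with homomorphisms of tori.

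\textbf{(i)$\Leftrightarrow$(ii).} We compare two central exact sequences. The first is $1\to\Spin(q)\to\Omega(q)\to R_{Z(q)/F}(\Gm)\to1$. The second is obtained by dividing $\Omega(q)$ by its center $C$, which gives $\Omega(q)/C\cong\PGOp(q)$. The center $C$ is an extension of $\Gm$-type tori containing $Z$. Forming the quotient of $C$ by $Z$ gives a commutative diagram. From it one identifies $R_{Z(q)/F}(\Gm)(K)/\underline{\mu}(\Omega(q)(K))$ with the image of $\delta_K$ modulo the image of $C(K)$. More precisely, we use the exact sequence
\[
C(K)\to C/Z(K)\to H^1(K,Z)\to H^1(K,C).
\]
Since $C$ is a quasi-trivial torus, we have $H^1(K,C)=0$. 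This shows that $\delta_K(\PGOp(q)(K))$ is the image of $(\Omega(q)/\Spin(q))(K)$ in $H^1(K,Z)$, and that
\[
R_{Z(q)/F}(\Gm)(K)/\underline{\mu}(\Omega(q)(K))\cong H^1(K,Z)/\delta_K(\PGOp(q)(K))
\]
or a similar identification. Naturality of these identifications with respect to norm and corestriction then gives the equivalence. Here one uses that norms on $T(K)$ correspond to corestriction on $H^1$ under connecting maps for tori.

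\textbf{(ii)$\Leftrightarrow$(iii).} This follows directly from \eqref{eq:delta-alpha}, which identifies $\delta_K(\PGOp(q)(K))$ with $\ker\alpha_K$ for $K=L,F$.

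The main obstacle is the precise bookkeeping of the center $C$ of $\Omega(q)$ and the verification that the relevant connecting maps commute with norms and corestrictions. The risk is in signs and in identifying $C$ as quasi-trivial. If this becomes cumbersome, I would simply cite \cite[Theorem~2.5 and \S2]{BCM2019}, where exactly this equivalence is established.
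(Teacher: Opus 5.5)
The paper gives no argument for this lemma; it only cites \cite[Lemma~2.2]{BCM2019}. Your sketch reconstructs the standard argument behind that citation, and its overall structure is correct. Set $C\coloneqq Z(\Omega(q))$. This is $\ker(\Omega(q)\to\PGOp(q))=R_{Z(q)/F}(\Gm)$, so its quasi-triviality is a known fact rather than a risk. Set $T_0\coloneqq\Omega(q)/\Spin(q)\cong C/Z$, and let $\partial_K\colon T_0(K)\to H^1(K,Z)$ be the connecting map of $1\to Z\to C\to T_0\to 1$.

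With this notation your chain of reasoning goes through:
\begin{itemize}
\item[--] You get $T_0(K)/\pi(\Omega(q)(K))\cong H^1(K,Z)/\delta_K(\PGOp(q)(K))$.
\item[--] Together with $\Cor_{L/F}\circ\partial_L=\partial_F\circ N_{T_0,L/F}$, this gives (i)$\Leftrightarrow$(ii).
\item[--] (ii)$\Leftrightarrow$(iii) is \eqref{eq:delta-alpha}, as you say.
\end{itemize}
Compared with the bare citation, your route shows that only three facts are used: $[\Omega(q),\Omega(q)]=\Spin(q)$, $Z(\Omega(q))$ is quasi-trivial, and $\Omega(q)/Z(\Omega(q))=\PGOp(q)$. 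In other words, it is the general envelope argument specialized to $\Omega(q)$.

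Three statements need correcting.

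\emph{The quotient $T_0$ is not $R_{Z(q)/F}(\Gm)$ in general.} Take $n$ odd and $Z(q)$ a field. Then $Z$ sits in $C_{\overline F}\simeq\Gm^2$ as the antidiagonal $\mu_4$, because the two half-spin representations are dual to each other. So $X^*(T_0)=\{(a,b)\in\mathbb Z^2: a\equiv b \bmod 4\}=\mathbb Z(1,1)\oplus\mathbb Z(2,-2)$, with the swap action. This gives $T_0\cong\Gm\times R^{(1)}_{Z(q)/F}(\Gm)$, which is not quasi-trivial. For the same reason, $\omega\mapsto\underline\sigma(\omega)\omega$ is not the abelianization map in that case. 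Replace $\underline\mu$ by the projection $\pi\colon\Omega(q)\to T_0$ throughout; nothing else in your argument depends on the identification.

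\emph{The image of $\delta_K$ is misstated.} $\delta_K(\PGOp(q)(K))$ equals $\partial_K(\pi(\Omega(q)(K)))$, not the image of all of $T_0(K)$, which is all of $H^1(K,Z)$. This is the one step that needs an actual check:
\begin{itemize}
\item[--] Lift $h\in\PGOp(q)(K)$ to $\omega\in\Omega(q)(K)$, using $H^1(K,C)=0$.
\item[--] Write $\omega=cs$ with $c\in C(K_s)$ and $s\in\Spin(q)(K_s)$.
\item[--] Since $\gamma(\omega)=\omega$ and $c$ is central, $\gamma(s)s^{-1}=\gamma(c)^{-1}c$.
\item[--] Hence $\delta_K(h)=\pm\partial_K(\pi(\omega))$.
\end{itemize}

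\emph{The direction (ii)$\Rightarrow$(i) needs one more input.} You need $\ker\partial_F=\operatorname{im}C(F)\subset\pi(\Omega(q)(F))$. This holds because $C\subset\Omega(q)$.
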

	
	\begin{proof}
		See \cite[Lemma~2.2]{BCM2019}.
	\end{proof}
	
	The next theorem reduces Theorem~\ref{thm:main} to the case when $L/F$ is quadratic.
	
	\begin{theorem}[Bhaskhar--Chernousov--Merkurjev]\label{thm:quadratic-reduction}
		Let $q$ be a nondegenerate quadratic form over $F$ of even dimension
		at least $4$. Suppose that, for every finite separable extension $K/F$,
		the norm principle holds for $\Omega(q)_K$ over every quadratic
		extension of $K$. Then the norm principle holds for $\Omega(q)$ over
		every finite separable extension of $F$.
	\end{theorem}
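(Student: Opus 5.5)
This is a result attributed to Bhaskhar--Chernousov--Merkurjev, so the natural course is to reconstruct the standard reduction argument and cite \cite{BCM2019} for the details. I will work with the cohomological criterion of Lemma~\ref{lem:bcm-equivalence}(iii): the norm principle for $\Omega(q)$ over $L/F$ is equivalent to $\Cor_{L/F}(\ker\alpha_L)\subset\ker\alpha_F$. Given a finite separable extension $L/F$ and $\xi\in\ker\alpha_L$, I must show $\Cor_{L/F}(\xi)\in\ker\alpha_F$.

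The first step is a reduction to $\ell$-power degree with $\ell=2$. The group $Z$ has order $4$, so $H^1(F,Z)$ is $4$-torsion. Take a maximal extension $F'/F$ of odd degree inside a Galois closure; that is, $F'$ is the fixed field of a $2$-Sylow subgroup. Restriction to $F'$ followed by corestriction is multiplication by the odd integer $[F':F]$, which is invertible on $H^1(F,Z)$. Moreover $\ker\alpha$ is compatible with restriction, and $\delta(\PGOp(q)(F'))$ corestricts into $\delta(\PGOp(q)(F))$ when $[F':F]$ is odd. This last point is the standard argument via Lemma~\ref{lem:bcm-equivalence}(ii) and the fact that an odd-degree extension splits no new torsor obstruction, namely Springer-type injectivity of $H^1(F,\Spin(q))\to H^1(F',\Spin(q))$ on the relevant classes. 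Combining these with the projection formula, it suffices to prove the inclusion after base change to $F'$. There $L\otimes_F F'$ decomposes into fields, each a $2$-power-degree extension of $F'$ inside a Galois $2$-extension.

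The second step is a dévissage. Any separable extension of $2$-power degree contained in a Galois $2$-extension is a tower $F'=K_0\subset K_1\subset\cdots\subset K_m$ of quadratic extensions, because $2$-groups have composition series with quotients of order $2$. By hypothesis the norm principle holds for $\Omega(q)_{K_i}$ over $K_{i+1}/K_i$. Corestriction is transitive, so induction along the tower gives the inclusion for $K_m/F'$. For a subextension of the tower, I use restriction--corestriction to the Galois closure together with an odd-degree or tower argument again.

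The main obstacle is the first step: checking that corestriction from an odd-degree extension preserves $\ker\alpha$, and that the coprime restriction--corestriction argument interacts correctly with the nonabelian set $H^1(\cdot,\Spin(q))$. To handle this I would use the group-theoretic description (ii) via $\delta$. For $K/F$ of odd degree and $g\in\PGOp(q)(K)$, I would show $\Cor_{K/F}\delta_K(g)\in\delta_F(\PGOp(q)(F))$ by invoking Scharlau's and Knebusch's norm principles, which hold for all finite extensions, together with the explicit description of $\delta$ in terms of multipliers and the discriminant algebra, as in \cite[\S2]{BCM2019}. In practice I would cite \cite[Theorem~3.?]{BCM2019} for this reduction.
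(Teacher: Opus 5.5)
Your overall architecture is the right one (the paper itself simply cites \cite[Theorem~2.7]{BCM2019}). You pass to the fixed field $F'$ of a $2$-Sylow subgroup of a Galois closure and write $L\otimes_F F'\simeq\prod_i L_i$. Each $L_i/F'$ is then the top of a tower of quadratic extensions, since every subgroup of a $2$-group lies in a chain of subgroups with successive indices $2$; so your remark about ``subextensions of the tower'' is unnecessary. Transitivity of corestriction then applies, and so does the hypothesis, because every field in the tower is finite separable over $F$.

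The genuine gap is in your Step~1. As you set it up, you need $\Cor_{F'/F}(\ker\alpha_{F'})\subset\ker\alpha_F$, i.e.\ the norm principle over the odd-degree extension $F'/F$, and neither justification you give establishes it. A trivial-kernel (``Springer-type'') statement for $H^1(F,\Spin(q))\to H^1(F',\Spin(q))$ only lets you detect membership in $\ker\alpha_F$ after restricting to $F'$. But $\operatorname{res}_{F'/F}\Cor_{F'/F}(\xi)$ decomposes along $F'\otimes_F F'$ into corestrictions from factors of uncontrolled degree over $F'$, which is the original problem again. The fallback via Scharlau's and Knebusch's norm principles plus an explicit formula for $\delta$ is not an argument either. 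The norm principle for $\Omega(q)$ is a compatibility between those two norm principles, not a formal consequence of each holding separately, and you give no reason why odd degree changes this. An unspecified citation (``Theorem~3.?'') does not close the gap.

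The fix is to drop both the odd-degree norm principle and the restriction--corestriction trick, and apply the trivial-kernel statement directly to $\eta\coloneqq\Cor_{L/F}(\xi)$ with $\xi\in\ker\alpha_L$. By the double coset formula, $\operatorname{res}_{F'/F}(\eta)=\sum_i\Cor_{L_i/F'}(\operatorname{res}_{L_i/L}\xi)$. Each $\operatorname{res}_{L_i/L}\xi$ lies in $\ker\alpha_{L_i}$. Each $\Cor_{L_i/F'}$ maps $\ker\alpha_{L_i}$ into $\ker\alpha_{F'}$ by the tower argument. Finally, $\ker\alpha_{F'}=\delta_{F'}(\PGOp(q)(F'))$ is a subgroup. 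Hence $\operatorname{res}_{F'/F}(\alpha_F(\eta))=\alpha_{F'}(\operatorname{res}_{F'/F}\eta)$ is trivial.

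It then remains to prove what you only asserted: that $H^1(F,\Spin(q))\to H^1(F',\Spin(q))$ has trivial kernel when $[F':F]$ is odd.
\begin{enumerate}
\item If $\gamma$ dies over $F'$, its image in $H^1(F,\SO(q))$ dies over $F'$.
\item That image is therefore trivial, by Springer's theorem and the fact that $H^1(F,\SO(q))\to H^1(F,\operatorname{O}(q))$ has trivial kernel (reflections have determinant $-1$).
\item So $\gamma$ comes from some $a\in F^\times/F^{\times2}$, and $\gamma_{F'}=1$ means that $a$ is a spinor norm of $q_{F'}$.
\item Knebusch's norm principle gives $a^{[F':F]}=N_{F'/F}(a)\in\mathrm{Sn}(q)$, where $\mathrm{Sn}(q)\supset F^{\times2}$ is the group of spinor norms of $q$.
\item Since $[F':F]$ is odd, $a\in\mathrm{Sn}(q)$, so $\gamma$ is trivial.
\end{enumerate}
With this, the odd-degree step disappears and the reduction is complete.
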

	
	\begin{proof}
		See \cite[Theorem~2.7]{BCM2019}.
	\end{proof}
	
	Let $X$ be an $F$-variety. Recall that two rational points $x_0,x_1\in X(F)$ are directly $R$-equivalent if there is a rational map $\varphi\colon\mathbb P^1_F\dashrightarrow X$, defined at $0$ and $1$, with $\varphi(0)=x_0$ and $\varphi(1)=x_1$. The equivalence relation
	generated by this relation is called $R$-equivalence.
	
	For a smooth affine $F$-group $G$, the points $R$-equivalent to the identity form a normal subgroup $RG(F)$, and $G(F)/R$ denotes the quotient by this subgroup.
	
	\begin{remark}\label{rem:cayley}
		Let $F$ be a field of characteristic not equal to $2$, and let $q$ be a nondegenerate quadratic form over $F$. By the Cayley parametrization, $\SO(q)$ is $F$-rational; see \cite{Cayley1846} or \cite[\S 2.4]{GilleLectures}. In particular, for every field extension $K/F$ such that $K$ is infinite, the group $\SO(q)(K)/R$ is trivial.
	\end{remark}	
	
	\begin{theorem}[Gille]\label{thm:gille}
		Let
		\[
		1\longrightarrow C\longrightarrow\widetilde G\longrightarrow G\longrightarrow1
		\]
		be a central short exact sequence of $F$-groups, where $\widetilde G$ and $G$ are semisimple and $C$ is finite \'etale. For a field extension $K/F$, let
		$\partial_K\colon G(K)\to H^1(K,C)$ be the connecting map. Then,
		for every finite separable extension $L/F$,
		\[
		\Cor_{L/F}(\partial_L(RG(L)))\subset\partial_F(RG(F)).
		\]
	\end{theorem}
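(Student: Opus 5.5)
The plan is to reduce the statement to the norm compatibility of connecting maps for tori. The reduction goes through an auxiliary extension of $G$ by a quasi-trivial torus, so that the connecting map $\partial$ factors through a homomorphism to a torus. The steps are as follows.

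\emph{Step 1 (factoring $\partial$ through a torus).} I embed $C$ into a quasi-trivial $F$-torus $P$, for instance a product of Weil restrictions $R_{K_i/F}\Gm$ built from the Cartier dual of $C$. Put
\[G'\coloneqq(\widetilde G\times P)/C,\]
with $C$ embedded antidiagonally, and $T'\coloneqq P/C$. This gives two exact sequences:
\[1\to P\to G'\to G\to 1,\qquad 1\to\widetilde G\to G'\xrightarrow{\ \pi\ }T'\to1.\]
By Hilbert~90 for quasi-trivial tori, $H^1(K,P)=0$, so $G'(K)\to G(K)$ is surjective for every $K/F$. A routine diagram chase identifies $\partial_K(g)$ with $\partial'_K(\pi(g'))$, where $g'$ is any lift of $g$ and $\partial'_K\colon T'(K)\to H^1(K,C)$ is the connecting map of $1\to C\to P\to T'\to1$.

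\emph{Step 2 (lifting $R$-equivalence and applying the norm).} Let $g\in RG(L)$. I will show that $g$ lifts to some $g'\in RG'(L)$. Each rational curve $\mathbb P^1_L\dashrightarrow G$ through $1$ lifts to $G'$ over $L(t)$, because $H^1(L(t),P)=0$. The lift is defined on a dense open subset of $\mathbb A^1$, since $P$-torsors over open subsets of $\mathbb A^1$ are trivial for quasi-trivial $P$; it can then be normalized by an element of $P(L)$ so that it passes through $1$. Hence $\pi(g')\in RT'(L)$. Since the norm $N_{T',L/F}$ is induced by a morphism $R_{L/F}T'\to T'$, it preserves $R$-equivalence. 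Moreover, for tori $\Cor_{L/F}\circ\partial'_L=\partial'_F\circ N_{T',L/F}$. Therefore
\[\Cor_{L/F}(\partial_L(g))=\partial'_F\bigl(N_{T',L/F}(\pi(g'))\bigr),\qquad N_{T',L/F}(\pi(g'))\in RT'(F).\]

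\emph{Step 3 (main obstacle).} It remains to prove $RT'(F)\subset\pi(RG'(F))$. Once this holds, the class above equals $\partial_F$ of the image in $G$ of an element of $RG'(F)$, and that image lies in $RG(F)$. So let $\gamma\colon U\to T'$ be a rational curve with $\gamma(0)=1$, defined on a dense open $U\subset\mathbb A^1_F$. Pulling back $\pi$ along $\gamma$ gives a $\widetilde G$-torsor over $U$ that is trivial at the point $0$. I need a rational section of it through a point over $0$. My first attempt is to show the torsor is trivial over $F(t)$: I would use that $\widetilde G$ is simply connected, and try to extend the torsor across $\mathbb A^1$ via a Harder/Raghunathan--Ramanathan type result, which says torsors over $\mathbb A^1$ for simply connected groups are constant when trivial at a rational point. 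If that is not available, I would modify $G'$ to a flasque resolution, in the style of Colliot-Thélène--Sansuc, in which $\ker\pi$ is replaced by a group whose torsors over rational curves can be controlled. I expect this step to be the technical heart of the argument. This is the step where Gille's original proof (which the paper cites) does the real work, and I would fall back on it there.
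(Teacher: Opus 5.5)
Your Steps~1 and~2 are correct. With $G'=(\widetilde G\times P)/C$ and $\pi\colon G'\to T'=P/C$, you do get $\Cor_{L/F}(\partial_L(g))=\partial'_F\bigl(N_{T',L/F}(\pi(g'))\bigr)$ for some $g'\in RG'(L)$.

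The gap is Step~3, and it is not merely unproved: the inclusion $RT'(F)\subset\pi(RG'(F))$ is false in general. So is the weaker inclusion $\partial'_F(RT'(F))\subset\partial_F(RG(F))$.
\begin{itemize}
\item[] \emph{Example.} Take $F=\mathbb R$, $D=\mathbb H$, $\widetilde G=\mathrm{SL}_1(D)$ (simply connected), $G=\mathrm{PGL}_1(D)$ and $C=\mu_2\subset P=\Gm$. Then $G'\simeq\mathrm{GL}_1(D)$, $T'\simeq\Gm$ (via squaring), and $\pi=\mathrm{Nrd}$.
\item[] \emph{Failure of the strong form.} Since $T'$ is rational, $RT'(\mathbb R)=\mathbb R^\times$; explicitly $-1=\gamma(1)$ with $\gamma(t)=1-2t$. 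But $\pi(G'(\mathbb R))=\mathrm{Nrd}(\mathbb H^\times)=\mathbb R_{>0}$.
\item[] \emph{Failure of the weak form.} $\partial_{\mathbb R}(G(\mathbb R))\subset H^1(\mathbb R,\mu_2)=\mathbb R^\times/\mathbb R^{\times2}$ consists of reduced norms modulo squares, hence is trivial. On the other hand $\partial'_{\mathbb R}(-1)\neq0$.
\end{itemize}
The same example shows where your torsor argument breaks. The pulled-back $\widetilde G$-torsor $\mathrm{Nrd}^{-1}(1-2t)$ lives only over $\mathbb A^1\setminus\{1/2\}$ and does not extend across $t=1/2$. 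It has no $\mathbb R(t)$-point, because $1-2t$ is not a sum of four squares in $\mathbb R(t)$. Results of Harder or Raghunathan--Ramanathan type concern torsors over all of $\mathbb A^1$, so they do not apply. Simple connectivity of $\widetilde G$ does not help, and no refinement of the torsor argument (flasque resolutions included) can establish Step~3 as you state it.

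The missing idea is that you discard the essential information at the end of Step~2. The element $N_{T',L/F}(\pi(g'))$ is not an arbitrary $R$-trivial element of $T'(F)$; it is the norm of an element of $\pi(RG'(L))$. In fact your Steps~1--2 show that the theorem is \emph{equivalent} to the norm principle for $\pi$ on $R$-trivial elements,
\[
N_{T',L/F}(\pi(RG'(L)))\subset\pi(RG'(F)).
\]
This uses two further facts: $\ker\partial'_F$ is the image of $P(F)$, and $P(F)=RP(F)\subset RG'(F)$ because $P$ is rational. So your reformulation does not reduce the difficulty, and nothing in Step~3 addresses this norm statement.

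Note also that the paper gives no argument of its own for this statement; it simply cites Gille's Theorem~A and Theorem~4.1 of Bhaskhar--Chernousov--Merkurjev. Falling back on Gille at Step~3 therefore amounts to citing the whole theorem, which makes Steps~1--2 unnecessary. It does not supply the missing step, which cannot be supplied in the form you state it.
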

	
	\begin{proof}
		See \cite[Theorem~A]{Gille1997} or \cite[Theorem~4.1]{BCM2019}.
	\end{proof}
	
	The following consequence of Theorem~\ref{thm:gille} will be used in the proof of Theorem~\ref{thm:main}.
	
	\begin{corollary}\label{cor:r-torus-corestriction}
		Let
		\[
		1\longrightarrow C\longrightarrow \widetilde G\longrightarrow G\longrightarrow 1
		\]
		be a central short exact sequence of $F$-groups, where $\widetilde G$
		and $G$ are semisimple and $C$ is finite \'etale. For every field
		extension $K/F$, let
		\[
		\partial_K\colon G(K)\longrightarrow H^1(K,C)
		\]
		be the connecting homomorphism.
		
		Let $L/F$ be a finite separable extension, and suppose that $x\in G(L)$ admits a factorization $x=rt$, where $r\in RG(L)$ and $t\in T(L)$ for some $F$-torus $T\subset G$. Then there exists $r_0\in RG(F)$ such that \[\Cor_{L/F}(\partial_L(x))= \partial_F(r_0N_{T,L/F}(t))\quad \text{in $H^1(F,C)$}.\] In particular, $\Cor_{L/F}(\partial_L(x))
		\in \partial_F(G(F))$.
	\end{corollary}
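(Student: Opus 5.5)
Since $\partial$ is a group homomorphism (as $C$ is central) and corestriction is additive, we have
\[
\Cor_{L/F}(\partial_L(x))=\Cor_{L/F}(\partial_L(r))\cdot\Cor_{L/F}(\partial_L(t)).
\]
The two factors can be handled separately.

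For the first factor, Theorem~\ref{thm:gille} applied to $r\in RG(L)$ gives $r_1\in RG(F)$ with $\Cor_{L/F}(\partial_L(r))=\partial_F(r_1)$.

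For the second factor, we use the compatibility of connecting maps with corestriction for tori. Let $\widetilde T\subset\widetilde G$ be the preimage of $T$. Since $C$ is central and finite, $\widetilde T$ is an $F$-group of multiplicative type whose identity component is a torus. The map $\widetilde T\to T$ is surjective with kernel $C$, because the isogeny $\widetilde G\to G$ is surjective.

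The restriction of $\partial_K$ to $T(K)$ is the connecting map $\partial^T_K\colon T(K)\to H^1(K,C)$ of the central exact sequence $1\to C\to\widetilde T\to T\to 1$ of commutative $F$-groups. This holds by functoriality of connecting maps with respect to the inclusion of this sequence into the original one. For commutative groups the long exact cohomology sequence commutes with corestriction, and corestriction on $H^0(-,T)$ is the norm $N_{T,L/F}$. Hence
\[
\Cor_{L/F}(\partial^T_L(t))=\partial^T_F(N_{T,L/F}(t))=\partial_F(N_{T,L/F}(t)).
\]
Here I use that corestriction is a morphism of cohomological $\delta$-functors on the category of commutative Galois modules. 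I would cite Serre or Neukirch--Schmidt--Wingberg for this, via the Shapiro-lemma description of $\Cor$ and the Weil restriction $R_{L/F}$.

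Combining the two factors, and using that $\partial_F$ is a homomorphism on $G(F)$,
\[
\Cor_{L/F}(\partial_L(x))=\partial_F(r_1)\,\partial_F(N_{T,L/F}(t))=\partial_F\bigl(r_1N_{T,L/F}(t)\bigr).
\]
We take $r_0=r_1$. The "in particular" statement follows because $r_0N_{T,L/F}(t)\in G(F)$.

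The main point needing care is that the connecting map for the possibly disconnected group $\widetilde T$ of multiplicative type commutes with corestriction. I expect no real obstacle here, since all groups involved are commutative and the sequence is exact as a sequence of Galois modules of $F_s$-points; this exactness holds because $\widetilde T(F_s)\to T(F_s)$ is surjective, $C$ being étale.
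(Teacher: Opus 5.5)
Your proposal is correct and follows the paper's proof: you use that $\partial$ is a homomorphism to split $x=rt$, apply Gille's Theorem~\ref{thm:gille} to the $R$-trivial factor $r$, and handle $t$ via the preimage $\widetilde T$ and the compatibility of corestriction with connecting maps, with $\Cor$ on $H^0$ being $N_{T,L/F}$. Your extra remarks on $\widetilde T$ make explicit what the paper leaves implicit: it is commutative because the commutator map factors through the connected $T\times T$ into the finite group $C$, and $\widetilde T(F_s)\to T(F_s)$ is surjective.
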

	
	\begin{proof}
		Since $C$ is central, the maps $\partial_K$ are homomorphisms.
		By Theorem~\ref{thm:gille}, there exists $r_0\in RG(F)$ such that $\Cor_{L/F}(\partial_L(r))=\partial_F(r_0)$.
		
		Let $\widetilde T\subset\widetilde G$ be the inverse image of $T$.
		Then
		\[
		1\longrightarrow C\longrightarrow\widetilde T\longrightarrow T\longrightarrow 1
		\]
		is a central exact sequence, and its connecting homomorphism is the
		restriction of $\partial_K$ to $T(K)$. Compatibility of
		corestriction with connecting homomorphisms gives $\Cor_{L/F}(\partial_L(t))=\partial_F(N_{T,L/F}(t))$.
		Therefore
		\begin{align*}
			\Cor_{L/F}(\partial_L(x))&=\Cor_{L/F}(\partial_L(r))+\Cor_{L/F}(\partial_L(t))\\
			&=\partial_F(r_0)+\partial_F(N_{T,L/F}(t))\\
			&=\partial_F(r_0N_{T,L/F}(t)).\qedhere
		\end{align*}
	\end{proof}

	\section{\'Etale algebras with involution and maximal tori}
	\label{sec:etale-tori}
	
	The description of maximal tori in $\SO(q)$ by \'etale algebras with involution is standard; see \cite[Proposition~3.3]{BCM03} or \cite[Proposition~1.2.1]{BF14}. Here we consider maximal tori in $\GOp(q)$.
	
	\begin{lemma}\label{lem:etale-similitude-torus}
		Let $F$ be a field of characteristic different from $2$, let $(V,q)$ be a nondegenerate quadratic space over $F$ of dimension $2n\geq2$, and let
		$E\subset\End_F(V)$ be a finite \'etale $F$-algebra of dimension $2n$, stable under $\operatorname{ad}_q$. Let $\tau=(\operatorname{ad}_q)|_E$, and suppose that $\dim_F E^\tau=n$. Then $V$ is free of rank one over $E$, the functor on commutative $F$-algebras $R$ given by
		\[
		S(R)=\{z\in(E\otimes_F R)^\times:z\tau(z)\in R^\times\}
		\]
		is a maximal $F$-torus of $\GOp(q)$, and the restriction to $S$ of the multiplier map is given by $z\mapsto z\tau(z)$.
	\end{lemma}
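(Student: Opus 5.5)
We will prove the three assertions in order: first that $V$ is free of rank one over $E$, then that $S$ lies in $\GOp(q)$ with multiplier $z\mapsto z\tau(z)$, and finally that $S$ is a maximal torus. Throughout, all three claims may be checked after extending scalars to a separable closure $F_s$; we write $\bar F=F_s$.

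\emph{Freeness.} Over $\bar F$, $E\otimes_F\bar F\cong\bar F^{2n}$, with primitive idempotents $e_1,\dots,e_{2n}$. Since $E\subset\End_F(V)$ acts faithfully, each $e_iV_{\bar F}\neq0$. Summing over the $2n$ indices, and using $\dim V=2n$, forces $\dim e_iV_{\bar F}=1$ for every $i$. Hence $V_{\bar F}$ is free of rank one over $E_{\bar F}$. Freeness then descends to $F$: for instance, $V$ is a module over the product of fields $E$ with each factor component of the right dimension, or one can pick $v$ with $E\to V$, $x\mapsto xv$, an isomorphism, which is an open condition with $F$ infinite (the finite case is handled separately by the component-dimension count).

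\emph{Membership in $\GOp(q)$ and the multiplier.} Since $E$ is commutative and stable under $\operatorname{ad}_q$, for $z\in S(R)$ we have $z^*z=\tau(z)z=z\tau(z)\in R^\times$. Thus $z\in\GO(q)(R)$ with $\mu(z)=z\tau(z)$. It remains to see that $S$ is a torus lying in the connected component $\GOp(q)$. $S$ is the kernel-type fibre product
\[
S=R_{E/F}(\Gm)\times_{R_{E^\tau/F}(\Gm)}\Gm
\]
of the norm map $N\colon R_{E/F}\Gm\to R_{E^\tau/F}\Gm$, $z\mapsto z\tau(z)$, with the inclusion $\Gm\to R_{E^\tau/F}\Gm$. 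Over $\bar F$ this becomes concrete. Because $\dim E^\tau=n$ and $\tau$ is an involution of $\bar F^{2n}$, $\tau$ permutes the idempotents without fixed points: a fixed idempotent would make $\dim E^\tau_{\bar F}>n$. So $\tau$ pairs the $e_i$ into $n$ pairs $(e_i,e_{i'})$. Then
\[
S_{\bar F}=\{(z_i):z_iz_{i'}=c\ \text{for all pairs}\}\cong\Gm^{n+1},
\]
which is a split torus, hence connected. Since $S$ is connected and contains $1$, it lies in the identity component $\GOp(q)$.

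\emph{Maximality.} Since $\operatorname{rank}\GOp(q)=n+1$ and $\dim S=n+1$, $S$ is a maximal torus. The step needing most care is the pairing argument for $\tau$ over $\bar F$, using $\dim E^\tau=n$, together with confirming that $S$ is smooth and connected. Both are handled by the explicit coordinates $\Gm^{n+1}$ given by $(c,z_i)$ for one index $i$ from each pair.
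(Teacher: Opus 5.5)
Your proof is correct. The freeness and maximality steps are essentially the paper's. The paper does the same dimension count, but directly over $F$: it writes $E=\prod_i E_i$ and $V=\bigoplus_i V_i$, so that $2n=\sum_i[E_i:F]\dim_{E_i}V_i\geq\sum_i[E_i:F]=2n$. Your passage to $F_s$ and back is a detour. Your descent sentence amounts to this same count (from $F_s$ you get $\dim_F V_i=[E_i:F]$, hence $\dim_{E_i}V_i=1$), so it is cleaner to do it over $F$ from the start.

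The genuine difference is in how you prove $S\subset\GOp(q)$. The paper computes pointwise. Since $V$ is free of rank one over $E$, one has $\det(z)=N_{E/F}(z)=N_{E^\tau/F}(z\tau(z))=N_{E^\tau/F}(\mu)=\mu^n$. This uses that $E$ is quadratic \'etale over $E^\tau$ with $\tau$ as its nontrivial automorphism, which is where the fixed-point-free pairing enters. You argue instead that $S$ is geometrically connected. Then the morphism $z\mapsto\det(z)\mu(z)^{-n}$ is trivial on $S$, because it takes values in the finite \'etale group $\mu_2$ (as $\det(g)^2=\mu(g)^{2n}$ and $\operatorname{char}F\neq2$) and sends $1$ to $1$.

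Your route is softer and does not need freeness at that step. It only needs that $\GOp(q)$ is the open and closed kernel of this map to $\mu_2$. The paper's route gives an explicit identity on $R$-points and avoids any appeal to the component structure of $\GO(q)$.
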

	
	\begin{proof}
		Write $E=\prod_i E_i$, where the $E_i$ are finite separable field extensions of $F$. The faithful action of $E$ on $V$ gives a decomposition $V=\bigoplus_i V_i$ with each $V_i$ a nonzero $E_i$-vector space. Therefore
		\[
		2n=\dim_F V=\sum_i[E_i:F]\dim_{E_i}V_i
		\geq\sum_i[E_i:F]=2n.
		\]
		Thus $\dim_{E_i}V_i=1$ for every $i$, proving that $V$ is free of rank one over $E$.
		
		Fix a separable closure $F_s$ of $F$. Since $E$ is \'etale of dimension $2n$, we have $E\otimes_FF_s\simeq F_s^{2n}$. The involution $\tau$ permutes the $2n$ factors. Since $\dim_F E^\tau=n$, no factor of $E\otimes_FF_s$ is fixed by $\tau$, and so after reordering we may use coordinates
		$(x_1,y_1,\ldots,x_n,y_n)$, with $\tau$ exchanging $x_i$ and $y_i$.
		Thus
		\[
		S_{F_s}
		\simeq \{(x_1,y_1,\ldots,x_n,y_n)\in\Gm^{2n}:x_1y_1=\cdots=x_ny_n\}\simeq \Gm^{n+1}.
		\]
		Thus $S$ is an $(n+1)$-dimensional $F$-torus.
		
		Since $\tau$ is the adjoint involution on $E$, an element $z\in S(R)$
		is a similitude of multiplier $\mu=z\tau(z)$. Therefore
		\[
		\det(z)=N_{E/F}(z)
		=N_{E^\tau/F}(z\tau(z))=\mu^n.
		\]
		This shows that $S\subset\GOp(q)$. The group $\GOp(q)$ has rank $n+1$, so $S$ is a  maximal $F$-torus of $\GOp(q)$.
	\end{proof}

	\begin{proposition}\label{prop:spectral-torus}
		Let $F$ be an infinite field of characteristic different from $2$, let $L/F$ be a quadratic extension with nontrivial automorphism $\sigma$, and let $(V,q)$ be a nondegenerate quadratic space over $F$ of dimension $2n\geq2$. Suppose that $P\in\operatorname{GL}(V_L)$ and $\nu\in F^\times$ satisfy $\sigma(P)=P^*=\nu P^{-1}$ and that the characteristic polynomial $\chi_P(t)$ is separable and prime to $t^2-\nu$. Let $L[P]\subset \End_L(V_L)$ be the $L$-subalgebra generated by $P$, and set
		\[
		E\coloneqq L[P]^\sigma\subset\End_F(V).
		\]
		Then the following statements hold.
		\begin{enumerate}
			\item The $F$-algebra $E$ is finite \'etale of dimension $2n$ and stable under $\operatorname{ad}_q$. Letting $\tau=(\operatorname{ad}_q)|_E$, we have $\dim_F E^\tau=n$ and
			$E\otimes_F L=L[P]$.
			\item The $E$-module $V$ is free of rank one. In particular,
			\[
			\operatorname{Cent}_{\End_L(V_L)}(L[P])=L[P].
			\]
			\item There exists $r\in\operatorname O(q)(F)$ such that
			\[
			r^2=1,\qquad rzr^{-1}=\tau(z)\quad \forall z\in E,
			\qquad\det(r)=(-1)^n.
			\]
		\end{enumerate}
		In particular, by Lemma~\ref{lem:etale-similitude-torus}, the functor on $F$-algebras $R$
		\begin{equation}\label{eq:definition-of-s}
		S(R)=\{z\in(E\otimes_F R)^\times:z\tau(z)\in R^\times\}
		\end{equation}
		is a maximal $F$-torus of $\GOp(q)$, with multiplier map $z\mapsto z\tau(z)$.
	\end{proposition}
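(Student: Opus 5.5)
The plan is to work through the three parts in order, using Galois descent for (1), Lemma~\ref{lem:etale-similitude-torus} for (2), and an explicit ``trace form'' model of $q$ on $E$ for (3).

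\emph{Step 1: the algebra $L[P]$ is étale and stable under $\sigma$ and $\operatorname{ad}_q$.} Since $\chi_P$ is separable, the minimal polynomial of $P$ equals $\chi_P$. Hence $L[P]\simeq L[t]/(\chi_P)$ is an étale $L$-algebra of dimension $2n$. Because $P$ is invertible, $P^{-1}$ is a polynomial in $P$. So $\sigma(P)=P^*=\nu P^{-1}$ lies in $L[P]$, and both $\sigma$ (acting on $\End_L(V_L)=\End_F(V)\otimes_FL$ through the second factor) and $\operatorname{ad}_q$ (extended $L$-linearly) preserve $L[P]$. Since $q$ is defined over $F$, $\operatorname{ad}_q$ commutes with $\sigma$. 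Galois descent for the semilinear $\sigma$-action on $L[P]$ then gives $E\otimes_FL=L[P]$, with $E=L[P]^\sigma$ étale of dimension $2n$ over $F$. Moreover $\operatorname{ad}_q$ preserves $E$, and it restricts to an automorphism $\tau$ because $E$ is commutative.

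\emph{Step 2: the fixed algebra $E^\tau$.} Compute $\dim_F E^\tau=\dim_L L[P]^{\tau}$ after extending scalars to a separable closure. There $L[P]\otimes\overline L\simeq\prod_{\chi_P(x)=0}\overline L$, and $\tau$ is the automorphism $t\mapsto \nu t^{-1}$ of $\overline L[t]/(\chi_P)$, so it permutes the factors via $x\mapsto \nu/x$. A fixed factor would give a root with $x^2=\nu$, which is excluded because $\chi_P$ is prime to $t^2-\nu$. Thus $\tau$ acts freely on the $2n$ factors, forming $n$ swapped pairs, so $\dim E^\tau=n$. This completes (1).

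\emph{Part (2).} Lemma~\ref{lem:etale-similitude-torus} now applies and shows that $V$ is free of rank one over $E$. Hence $V_L$ is free of rank one over $E\otimes_FL=L[P]$. The centralizer of a commutative algebra $C$ acting on a free rank-one $C$-module is $\End_C(C)=C$, which gives the stated centralizer equality.

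\emph{Part (3).} Choose an $E$-basis vector $v\in V$ and set $\ell(a)=b(av,v)$ for $a\in E$. Then $b(av,cv)=\ell(\tau(c)a)$, and symmetry of $b$ gives $\ell\circ\tau=\ell$. Define $r(av)\coloneqq \tau(a)v$. Then $r^2=1$, and $rzr^{-1}=\tau(z)$ for $z\in E$, since $r(z\cdot av)=\tau(z)\tau(a)v=\tau(z)\,r(av)$. To see that $r$ is an isometry, compute
\[
b(\tau(a)v,\tau(c)v)=\ell(c\,\tau(a))=\ell(\tau(\tau(c)a))=\ell(\tau(c)a)=b(av,cv).
\]
Finally, under $V\simeq E$ the map $r$ is just $\tau$. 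Over $F_s$, $\tau$ swaps $n$ pairs of coordinates, so $\det r=(-1)^n$. The final sentence about $S$ follows directly from Lemma~\ref{lem:etale-similitude-torus}.

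The step needing the most care is Step 2, where $\tau$ has to be correctly identified as $t\mapsto\nu/t$ on the spectrum so that the coprimality hypothesis yields a free action. Alongside this one must check that the semilinear $\sigma$ and the linear $\operatorname{ad}_q$ commute, so that $\tau$ descends to $E$.
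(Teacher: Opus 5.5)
Your proposal is correct and follows the paper's proof essentially step by step: Galois descent (Speiser's lemma) gives $E\otimes_F L=L[P]$; the fixed-point-free permutation $x\mapsto\nu/x$ of the geometric factors gives $\dim_F E^\tau=n$; Lemma~\ref{lem:etale-similitude-torus} gives (2); and (3) uses the same linear form $\ell$, with $r$ obtained by transporting $\tau$ through $E\simeq V$. The only cosmetic difference is in $\det(r)=(-1)^n$: you read it off from $\tau$ acting over $F_s$ as a product of $n$ disjoint transpositions, whereas the paper counts the dimensions of the $\pm1$-eigenspaces of $\tau$.
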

	
	\begin{proof}
		(1) Since $\chi_P(t)$ is separable of degree $2n$, it is also
		the minimal polynomial of $P$. Thus $L[P]\simeq L[t]/(\chi_P(t))$ is a finite \'etale $L$-algebra of dimension $2n$. The identities $\sigma(P)=P^*=\nu P^{-1}$ show that $L[P]$ is stable under both $\sigma$ and $\operatorname{ad}_q$. By Speiser's lemma \cite[Lemma~2.3.8]{GS17}, the natural map $E\otimes_F L\to L[P]$ is an isomorphism. Hence $E$ is finite \'etale of dimension $2n$. Since $q$ is defined over $F$, its adjoint involution $\operatorname{ad}_q$ commutes with $\sigma$ and therefore restricts to an $F$-algebra involution $\tau$ on $E$.
		
		Fix a separable closure $F_s$ containing $L$, and let $\Sigma\subset F_s$ be the set of eigenvalues of $P$. Then
		\[
		E\otimes_F F_s\simeq L[P]\otimes_L F_s \simeq\prod_{z\in\Sigma}F_s.
		\]
		The involution $\tau$ permutes these factors by $z\mapsto\nu/z$.
		Because $\chi_P$ is prime to $t^2-\nu$, this permutation has no fixed points. Therefore, the $2n$ factors are exchanged in $n$ pairs, and their fixed subalgebra has dimension $n$. Since $\mathrm{char}(F)\neq 2$ and $\sigma$ has order $2$, taking $\sigma$-invariants commutes with scalar extension, so $\dim_F E^\tau=n$.
		
		(2) By Lemma~\ref{lem:etale-similitude-torus}, $V$ is free of rank one over $E$. Thus $V_L$ is therefore free of rank one over
		$E\otimes_F L=L[P]$, and hence
		\[
		\operatorname{Cent}_{\End_L(V_L)}(L[P])
		=\End_{L[P]}(V_L)=L[P],
		\]
		where the last identification sends an element of $L[P]$ to multiplication by that element.
		
		(3) Using (2), choose an $E$-linear isomorphism
		$\iota\colon E\xrightarrow{\sim} V$, and use it define a bilinear form $b_E$ on $E$:
		\[
		b_E(x,y)\coloneqq b(\iota(x),\iota(y))\qquad \forall x,y\in E.
		\]
		The adjoint identity becomes
		\[
		b_E(zx,y)=b_E(x,\tau(z)y)\qquad \forall x,y,z\in E.
		\]
		Define the $F$-linear map $\ell\colon E\to F$ by $\ell(z)=b_E(1,z)$.
		Applying the preceding identity to $b_E(x\cdot1,y)$ gives
		\[
		b_E(x,y)=\ell(\tau(x)y)\qquad \forall x,y\in E.
		\]
		Moreover, for every $z\in E$ symmetry of $b_E$ gives
		\[
		\ell(\tau(z))=b_E(1,\tau(z))=b_E(z,1)=b_E(1,z)=\ell(z).
		\]
		It follows that, for all $x,y\in E$,
		\[
		b_E(\tau(x),\tau(y))
		=\ell(x\tau(y))
		=\ell(\tau(x\tau(y)))=\ell(\tau(x)y)=b_E(x,y).
		\]
		Thus $\tau$, viewed as an $F$-linear automorphism of $E$, is an isometry.
		Set
		\[
		r\coloneqq\iota\circ\tau\circ\iota^{-1}\in\operatorname O(q)(F).
		\]
		Clearly $r^2=1$. For all $x,z\in E$, using the $E$-linearity of $\iota$, we
		have
		\[
		rzr^{-1}(\iota(x))
		=\iota(\tau(z\tau(x)))
		=\iota(\tau(z)x)
		=\tau(z)\iota(x),
		\]
		so $rzr^{-1}=\tau(z)$. Finally, $r$ has the same determinant as $\tau$
		acting on $E$. Its $+1$-eigenspace is $E^\tau$, of dimension $n$, and
		its $-1$-eigenspace also has dimension $n$. Hence
		$\det(r)=(-1)^n$.
	\end{proof}

	\section{Self-adjoint similitudes and generic products}\label{sec:self-adjoint}

	\begin{lemma}\label{lem:self-adjoint-root}
		Let $K$ be a field of characteristic different from $2$, let $(V,q)$
		be a nondegenerate quadratic space of dimension $2n\geq2$, and let
		$\lambda\in K^\times\setminus K^{\times2}$ be a multiplier of a
		similitude of $q$. There exists $A\in\GO(q)(K)$ such that $A^*=A$, $A^2=\lambda I$ and $\chi_A(t)=(t^2-\lambda)^n$.
	\end{lemma}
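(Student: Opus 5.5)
The plan is to build $A$ block by block from an orthogonal decomposition of $q$, using the fact that $\lambda$ is a similarity factor only when $q$ has a specific shape. A self-adjoint $A$ with $A^2=\lambda I$ automatically satisfies $A^*A=A^2=\lambda I$, so $A\in\GO(q)(K)$ with multiplier $\lambda$. It therefore suffices to find an orthogonal decomposition of $(V,q)$ into $2$-dimensional pieces, each carrying a self-adjoint $A$ with $A^2=\lambda$ and characteristic polynomial $t^2-\lambda$. The orthogonal sum of these blocks then has characteristic polynomial $(t^2-\lambda)^n$.

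\textbf{Step 1: the two model blocks.}
\begin{enumerate}
\item On a binary form $\langle c,c\lambda\rangle$ with orthogonal basis $e_1,e_2$, set $Ae_1=e_2$ and $Ae_2=\lambda e_1$.
\begin{itemize}
\item Self-adjointness is immediate, since $b(Ae_1,e_2)=c\lambda=b(e_1,Ae_2)$ and the other pairings vanish.
\item Clearly $A^2=\lambda$.
\end{itemize}
\item On a hyperbolic plane with basis $e,f$ where $b(e,e)=b(f,f)=0$ and $b(e,f)=1$, set $Ae=f$ and $Af=\lambda e$. A direct check gives $b(Ae,e)=1=b(e,Ae)$, $b(Af,f)=\lambda=b(f,Af)$, and $b(Ae,f)=0=b(e,Af)$. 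Hence $A$ is self-adjoint with $A^2=\lambda$.
\end{enumerate}
In both blocks the characteristic polynomial is $t^2-\lambda$, since $\lambda$ is not a square.

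\textbf{Step 2: decompose $q$ into these blocks.}
\begin{enumerate}
\item Write the Witt decomposition $q\simeq q_{\mathrm{an}}\perp mH$. Since $\lambda q\simeq q$ and $\lambda H\simeq H$, Witt cancellation gives $\lambda q_{\mathrm{an}}\simeq q_{\mathrm{an}}$.
\item Equivalently, $\langle 1,-\lambda\rangle\otimes q_{\mathrm{an}}$ is hyperbolic.
\item Next I invoke the classical divisibility theorem for anisotropic forms. For $\lambda\notin K^{\times2}$, an anisotropic form annihilated by $\langle 1,-\lambda\rangle$ is divisible by the Pfister form $\langle 1,\lambda\rangle$, so $q_{\mathrm{an}}\simeq\langle 1,\lambda\rangle\otimes\psi$ (Lam, Chapter~X; equivalently, $q_{\mathrm{an}}$ is a transfer from $K(\sqrt\lambda)$).
\item Consistency check: $\langle1,-\lambda\rangle\otimes\langle1,\lambda\rangle=\langle1,\lambda,-\lambda,-\lambda^2\rangle$ is hyperbolic, as required.
\item Diagonalizing $\psi=\langle c_1,\dots,c_k\rangle$ gives $q\simeq\perp_i\langle c_i,c_i\lambda\rangle\perp mH$.
\end{enumerate}
This is a decomposition into exactly $n$ binary blocks of the two types in Step 1. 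Summing the block operators produces the desired $A$.

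\textbf{Main obstacle.} The main obstacle is Step 2(3): the passage from ``$\lambda$ is a similarity factor of $q_{\mathrm{an}}$'' to ``$q_{\mathrm{an}}$ is divisible by $\langle1,\lambda\rangle$''. If I prefer not to cite it, I would prove it by induction on $\dim q_{\mathrm{an}}$, using a similitude $g$ of $q_{\mathrm{an}}$ with multiplier $\lambda$.
\begin{enumerate}
\item Since $\lambda$ is not a square, $g$ has no eigenvector in $V$: otherwise $\lambda q(v)=q(v)$ would force $q(v)=0$, contradicting anisotropy.
\item Hence for any $v\neq0$, the pair $v$, $gv$ spans a $2$-dimensional subspace.
\item I would choose $v$ with $b(v,gv)=0$, exploiting the freedom to adjust $g$ by isometries or to pass to the subspace spanned by $v$ and $gv$. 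That subspace is then $\langle q(v),\lambda q(v)\rangle$, and the induction proceeds on its orthogonal complement.
\item Securing the orthogonality $b(v,gv)=0$ is the genuinely delicate point. This is why I would fall back on the standard theorem that the anisotropic part of the kernel of multiplication by $\langle1,-\lambda\rangle$ on the Witt ring is divisible by $\langle1,\lambda\rangle$.
\end{enumerate}
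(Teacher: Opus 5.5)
\textbf{The gap is Step 2(3), and the statement there is false.} Take $K=\mathbb{Q}$, $q=\langle 1,1\rangle$ and $\lambda=2$. Then $q$ is anisotropic and $2\notin\mathbb{Q}^{\times2}$. Moreover $2q=\langle 2,2\rangle\simeq q$, because both have determinant $1$ modulo squares and $\langle 2,2\rangle$ represents $1=2(\tfrac12)^2+2(\tfrac12)^2$. So $\lambda$ is a multiplier. But every $\langle c,2c\rangle$ has determinant $2$ modulo squares. Hence $q$ is not divisible by $\langle 1,2\rangle$, and your decomposition $q\simeq\perp_i\langle c_i,c_i\lambda\rangle\perp mH$ does not exist. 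In general such a decomposition forces $\det q\equiv\lambda^k(-1)^m$ modulo squares, and $\lambda\in G(q)$ does not imply this.

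The classical divisibility theorem concerns anisotropic forms that become \emph{hyperbolic over} $K(\sqrt\lambda)$; those are divisible by $\langle 1,-\lambda\rangle$. A form with multiplier $\lambda$ is instead a transfer $\operatorname{tr}_*(\psi)$ of a form $\psi$ over $K(\sqrt\lambda)$, which is essentially the lemma itself. Your parenthetical ``equivalently'' is therefore wrong: divisibility by $\langle 1,\lambda\rangle$ only captures transfers of forms extended from $K$. The lemma does hold in this example: $A=\left(\begin{smallmatrix}1&1\\1&-1\end{smallmatrix}\right)$ is symmetric with $A^2=2I$. The same example defeats your fallback. Any $v$ with $b(v,gv)=0$ would give $q\simeq\langle q(v),2q(v)\rangle$, so the orthogonality you want is in general impossible, not merely delicate.

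\textbf{The fix is to drop orthogonality altogether.} Take an anisotropic $v\in V$ and a similitude $g$ with $\mu(g)=\lambda$. Put $w=gv$, $a=q(v)$ and $\beta=b(v,w)$. Then $q(w)=\lambda a$, and the Gram determinant $\lambda a^2-\beta^2$ is nonzero because $\lambda\notin K^{\times 2}$. So $W=\operatorname{span}(v,w)$ is a nondegenerate plane. Define $A_W$ by $A_Wv=w$ and $A_Ww=\lambda v$. It is self-adjoint whatever $\beta$ is:
\[
b(A_Wv,v)=\beta=b(v,A_Wv),\qquad b(A_Wv,w)=\lambda a=b(v,A_Ww),\qquad b(A_Ww,w)=\lambda\beta=b(w,A_Ww).
\]
Thus $\lambda q_W\simeq q_W$. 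Cancelling $q_W$ in $q_W\perp\lambda q_{W^\perp}\simeq\lambda q\simeq q\simeq q_W\perp q_{W^\perp}$ shows that $\lambda$ is a multiplier of $q_{W^\perp}$. Induction on $\dim V$ then yields $A=\perp A_W$ with $A^*=A$, $A^2=\lambda I$ and $\chi_A=(t^2-\lambda)^n$. Your two model blocks are the same construction with $\beta=0$, respectively $a=0$.

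This gives a self-contained argument. The paper instead quotes Vinroot's theorem that a similitude of multiplier $\lambda$ factors as $\rho A$, with $\rho$ an orthogonal involution and $A^2=\lambda I$. It reads off $A^*=A$ from $A^*A=\lambda I=A^2$, and computes $\chi_A$ by viewing $V$ as a vector space over $K[t]/(t^2-\lambda)$.
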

	
	\begin{proof}
		By \cite[Theorem~1]{Vinroot06}, a similitude of multiplier $\lambda$
		can be written as $\rho A$, where $\rho\in \mathrm{O}(q)(K)$ is an orthogonal involution,
		$A^2=\lambda I$, and $\mu(A)=\lambda$. Since $\rho^*\rho=I$, we have $A^*A=\lambda I=A^2$, and hence multiplication on the right by $A^{-1}$ gives $A^*=A$. 
		
		Let $t$ be a variable over $K$, and consider the quadratic field extension $K' \coloneqq K[t]/(t^2-\lambda)$. Since $A^2=\lambda I$, the $K[t]$-action on $V$ given by letting $t$ act via $A$ makes $V$ into a $K'$-vector space of dimension $n$. Fix a $K'$-basis $v_1,\dots,v_n$ of $V$, so that $V=\oplus_{i=1}^nK'v_i$. For every $i=1,\dots,n$, the $K$-vector space $K'v_i$ has the $K$-basis $v_i,tv_i$. Since $Av_i=tv_i$ and $A(tv_i)=t^2v_i=\lambda v_i$, the characteristic polynomial of the restriction of $A$ to $K'v_i$ is $t^2-\lambda$. It follows that $\chi_A(t)=(t^2-\lambda)^n$.
	\end{proof}

	\begin{proposition}\label{prop:regular-product}
		Let $L/F$ be a quadratic extension with nontrivial automorphism $\sigma$,
		let $(V,q)$ be a quadratic space over $F$ of dimension $2n\geq2$, and
		suppose that $F$ is infinite. Let
		$\lambda\in L^\times\setminus L^{\times2}$ be a multiplier of a
		similitude of $q_L$, and put $\nu\coloneqq \lambda\sigma(\lambda)\in F^\times$. Then there exists $A\in\GO(q)(L)$ such that, setting $P\coloneqq A\sigma(A)$, the following holds.
		\begin{enumerate}
			\item[(i)] We have $A^*=A$, $A^2=\lambda I$, and $\chi_A(t)=(t^2-\lambda)^n$.
			\item[(ii)] We have $\sigma(P)=P^*=\nu P^{-1}$.
			\item[(iii)] The characteristic polynomial $\chi_P(t)\in F[t]$ is separable and prime to $t^2-\nu$.
		\end{enumerate}
	\end{proposition}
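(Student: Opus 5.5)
Start from Lemma~\ref{lem:self-adjoint-root} applied over $K=L$: it gives $A_0\in\GO(q)(L)$ with $A_0^*=A_0$, $A_0^2=\lambda I$ and $\chi_{A_0}(t)=(t^2-\lambda)^n$. Then consider the conjugates $A_g\coloneqq gA_0g^{-1}$ for $g\in\SO(q)(L)$. For isometries $g$ we have $g^*=g^{-1}$, so $A_g^*=A_g$, $A_g^2=\lambda I$, and $\chi_{A_g}=\chi_{A_0}$. Hence (i) holds for every $A_g$.

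Property (ii) is formal for any $A$ satisfying (i). Since $q$ is defined over $F$, $\sigma$ commutes with $*$. Put $P=A\sigma(A)$. Then $\sigma(P)=\sigma(A)A$ and $P^*=\sigma(A)^*A^*=\sigma(A)A$. Moreover, $P\sigma(P)=A\sigma(A)\sigma(A)A=A\,\sigma(\lambda)A=\lambda\sigma(\lambda)I$, so $\sigma(P)=\nu P^{-1}$. Also $\chi_P\in F[t]$, because $\sigma(P)=\sigma(A)A$ is conjugate to $A\sigma(A)=P$ (by $A$). So everything reduces to (iii) for a suitable $g$.

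For (iii), view $g\mapsto \chi_{A_g\sigma(A_g)}$ as a morphism from the Weil restriction $R_{L/F}\SO(q)_L$ to affine space. This variety is $F$-rational (Remark~\ref{rem:cayley}), hence its $F$-points, i.e.\ $\SO(q)(L)$, are Zariski dense since $F$ is infinite. The conditions ``$\mathrm{disc}(\chi_P)\neq0$'' and ``$\mathrm{Res}(\chi_P,t^2-\nu)\neq0$'' are polynomial in $g$. It therefore suffices to exhibit one point of $R_{L/F}\SO(q)$ over $\bar F$ where both hold. Over $\bar F$ we have $R_{L/F}\SO(q)_{\bar F}\simeq\SO(q)_{\bar F}\times\SO(q)_{\bar F}$, with $\sigma$ swapping the factors. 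So $A_g\sigma(A_g)$ becomes $(g_1A_0g_1^{-1})(g_2\bar A_0g_2^{-1})$, where $\bar A_0$ is a self-adjoint element with square $\sigma(\lambda)$. Both factors are conjugate to fixed elements, and $g_1,g_2$ vary independently.

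The main obstacle is this geometric existence statement: for independent $\SO$-conjugates $X$ of $\mathrm{diag}(\sqrt\lambda,-\sqrt\lambda)$-type and $Y$ of $\sqrt{\sigma\lambda}$-type (each with eigenvalues $\pm$ in multiplicity $n$, self-adjoint), the product $XY$ should have $2n$ distinct eigenvalues, none equal to $\pm\sqrt\nu$. I would work with $q$ hyperbolic over $\bar F$ and reduce to $n=1$ blocks. Write $V=\bigoplus H_i$ orthogonally with $H_i$ hyperbolic planes. Choose $X,Y$ preserving each $H_i$ and self-adjoint there, with $XY|_{H_i}$ having eigenvalues $\{\beta_i,\nu/\beta_i\}$. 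Within one plane, the conjugates by $\SO(H_i)\simeq\Gm$ form a one-parameter family in which the trace of $XY$ varies nonconstantly; this is a $2\times2$ computation with $X=\begin{pmatrix}0&a\\ \lambda/a&0\end{pmatrix}$-type matrices in a hyperbolic basis. Choosing the parameters generically then gives distinct $\beta_i^{\pm1}$ across blocks, avoiding $\pm\sqrt\nu$. If independent conjugation within $\SO(H_1)\times\dots\times\SO(H_n)$ suffices, the point is found and density finishes the proof. If within-plane conjugation does not move the trace, I would instead use reflections mixing two planes, i.e.\ the $n=2$ case, as a fallback.
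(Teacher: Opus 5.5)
Your proposal is correct and follows the paper's proof essentially step for step. You conjugate the self-adjoint root $A_0$ by $\SO(q)(L)$, obtain (i) and (ii) formally, and obtain (iii) from the $F$-rationality of $R_{L/F}(\SO(q)_L)$ together with one $\overline F$-point built blockwise on hyperbolic planes. Your $2\times2$ computation does go through: on each plane $XY=\mathrm{diag}(a_i\sigma(\lambda)/b_i,\ \lambda b_i/a_i)$, which is exactly the paper's $\alpha\beta\,\mathrm{diag}(t_i^{-1},t_i)$, so the $n=2$ fallback is unnecessary.

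The one step you leave implicit, and which the paper checks, is that your block-diagonal $X$ and $Y$ actually lie in the $\SO(q)(\overline F)$-orbits of $A_0$ and $\sigma(A_0)$. The argument is short: a self-adjoint operator with square $\lambda$ has orthogonal, nondegenerate eigenspaces, so equal dimensions $n$ give $\operatorname{O}(q)(\overline F)$-conjugacy. Composing with a reflection in an eigenspace of $A_0$ then makes the conjugating isometry have determinant $1$.
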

	
	\begin{proof}
		By Lemma~\ref{lem:self-adjoint-root}, choose $A_0\in\GO(q)(L)$ such that
		$A_0^*=A_0$, $A_0^2=\lambda I$, and
		$\chi_{A_0}(t)=(t^2-\lambda)^n$. For every $g\in\SO(q)(L)$, set
		\[A_g\coloneqq gA_0g^{-1},\qquad P_g\coloneqq A_g\sigma(A_g).\]
		Conjugation by an isometry preserves the adjoint involution, so $A_g^*=A_g$, $A_g^2=\lambda I$, and $\chi_{A_g}(t)=(t^2-\lambda)^n$. Moreover,
		\[
		P_g^*=\sigma(A_g)A_g=\sigma(P_g)=\nu P_g^{-1}.
		\]
		Indeed, the last equality follows from
		$P_g\sigma(P_g)=A_g\sigma(A_g)^2A_g=\nu I$. Since an operator and
		its adjoint have the same characteristic polynomial, we also have
		$\sigma(\chi_{P_g})=\chi_{P_g}$. Thus $A_g$ and $P_g$ satisfy (i) and (ii) for every $g\in\SO(q)(L)$.
		
		We now show that $g$ may be chosen so that $\chi_{P_g}(t)$ is separable
		and prime to $t^2-\nu$, that is, so that (iii) is also satisfied. Consider the Weil restriction
		$W\coloneqq R_{L/F}(\SO(q)_L)$, and let $B\coloneqq \mathcal{O}(W)$, so that $W=\mathrm{Spec}(B)$.
		Let
		\[g_{\mathrm{univ}}\in W(B)=\SO(q)(L\otimes_FB)\] 
		be the universal point of $W$, corresponding to $\operatorname{id}_W\colon W\to W$. The coefficients of
		$\chi_{P_{g_{\mathrm{univ}}}}$ lie in
		$L\otimes_FB$, and so the identity
		$\sigma(\chi_{P_{g_{\mathrm{univ}}}})
		=\chi_{P_{g_{\mathrm{univ}}}}$ shows that they belong to
		$(L\otimes_FB)^\sigma=B$. It follows that \[D(g)\coloneqq\operatorname{Disc}_t(\chi_{P_g}(t)),\qquad Q(g)\coloneqq\operatorname{Res}_t(\chi_{P_g}(t),t^2-\nu)\] belong to $B$. Let $U\subset W$ be the principal open subscheme defined by $D(g)Q(g)\neq0$. Thus $U(F)$ consists precisely of those $g\in\SO(q)(L)$ for which $\chi_{P_g}(t)$ is separable and prime to $t^2-\nu$. 
		
		We show that $U$ is nonempty. Fix an algebraic closure $\overline F$ containing $L$. The two embeddings
		$L\hookrightarrow\overline F$ induce an $\overline F$-isomorphism
		\[
		W_{\overline F}\simeq
		\SO(q)_{\overline F}\times\SO(q)_{\overline F}
		\]
		under which the conjugation action on the pair
		$(A_0,\sigma(A_0))$ becomes
		\[
		(g_1,g_2)\cdot(A_0,\sigma(A_0))
		=(g_1A_0g_1^{-1},
		g_2\sigma(A_0)g_2^{-1}).
		\]
		It is therefore enough to find operators $A_1$ and $A_2$, conjugate under $\SO(q)(\overline F)$ to $A_0$ and $\sigma(A_0)$, respectively, such that $A_1A_2$ has distinct eigenvalues, and such that no eigenvalue of $A_1A_2$ has square $\nu$.
		
		Choose $\alpha,\beta\in\overline F^\times$ with
		$\alpha^2=\lambda$ and $\beta^2=\sigma(\lambda)$, and decompose
		$V_{\overline F}$ into $n$ orthogonal hyperbolic planes with Gram
		matrix $H=\left(\begin{smallmatrix}0&1\\1&0\end{smallmatrix}\right)$.
		Choose $t_1,\ldots,t_n\in\overline F^\times$ such that the $2n$
		elements $t_i,t_i^{-1}$ are pairwise distinct and none is equal to
		$\pm1$. On the $i$th hyperbolic plane define
		\[
		A_1=\alpha
		\begin{pmatrix}0&1\\1&0\end{pmatrix},
		\qquad
		A_2=\beta
		\begin{pmatrix}0&t_i\\t_i^{-1}&0\end{pmatrix}.
		\]
		Both operators are self-adjoint for the adjoint involution determined by $H$. Moreover, $A_1^2=\lambda I$ and $A_2^2=\sigma(\lambda)I$, and the eigenvalues $\pm\alpha$ of $A_1$ and $\pm\beta$ of $A_2$ each occur with multiplicity $n$.
		
		We claim that $A_1$ is $\SO(q)(\overline F)$-conjugate to $A_0$. Indeed, the corresponding eigenspaces of $A_0$ and $A_1$ are nondegenerate of the same dimension, so $A_0$ and $A_1$ are $\operatorname O(q)(\overline F)$-conjugate. If an isometry conjugating $A_0$ to $A_1$ has determinant $-1$, composing it with a reflection in an eigenspace of $A_0$ gives an isometry conjugating $A_0$ to $A_1$ with determinant $1$. Thus the isometry which conjugates $A_0$ to $A_1$ may be chosen in $\SO(q)(\overline F)$. Similarly, $A_2$ is $\SO(q)(\overline F)$-conjugate to $\sigma(A_0)$.
		
		On the $i$th hyperbolic plane one has
		\[
		A_1A_2=\alpha\beta
		\begin{pmatrix}t_i^{-1}&0\\0&t_i\end{pmatrix}.
		\]
		Hence the eigenvalues of $A_1A_2$ are the $2n$ pairwise distinct
		elements $\alpha\beta t_i^{\pm1}$, and moreover none of them has square $\nu$ because  $t_i\neq\pm1$. Thus $U(\overline F)$, and hence
		$U$, is nonempty.
		
		Finally, by Remark~\ref{rem:cayley},
		$\SO(q)_L$ is $L$-rational. It follows that the Weil restriction $W$ is $F$-rational. As $F$ is infinite, this implies that $U(F)\neq\emptyset$. Choose
		$g\in U(F)$ and set $A\coloneqq A_g$ and
		$P\coloneqq P_g$. Then $A$ and $P$ satisfy properties (i)-(iii).
	\end{proof}
	
	\section{Proofs of Theorems~\ref{thm:main} and \ref{thm:toral-replacement}}
	\label{sec:toral-replacement}
	
	\begin{proof}[Proof of Theorem~\ref{thm:toral-replacement}]
		Let $f\in\GOp(q)(L)$, and set $\lambda=\mu(f)$. If
		$\lambda=c^2$ with $c\in L^\times$, we may take $s=cI$. Suppose now that $\lambda$ is nonsquare, and let $A\in \GO(q)(L)$ and $P\coloneqq A\sigma(A)$ be as in Proposition~\ref{prop:regular-product}. Set $E\coloneqq L[P]^\sigma$, and let $\tau$, $r$, and $S$ be as in Proposition~\ref{prop:spectral-torus}.
		
		Conjugation by $A$ induces the adjoint involution on $L[P]$: indeed, 
		using $A^2=\lambda I$, we have
		\[
		APA^{-1}=A^2\sigma(A)A^{-1}=\sigma(A)A=P^*=\mathrm{ad}_q(P).
		\]
		Since conjugation by $A$ and the adjoint involution are $L$-algebra
		automorphisms of the commutative algebra $L[P]$, it follows that they coincide on $L[P]$.
		Conjugation by $r$ also induces $\mathrm{ad}_q$ on $L[P]$.
		It follows that $s\coloneqq Ar$ centralizes $L[P]$, and hence by Proposition~\ref{prop:spectral-torus}(2) that $s\in L[P]$. Moreover, $s^{-1}$ also
		centralizes $L[P]$, so $s\in L[P]^\times=(E\otimes_F L)^\times$.
		Since $r$ is an isometry and $\mu(A)=\lambda$, we have $\tau(s)s=s^*s=\lambda I$. Therefore $\mu(s)=\lambda$ and, by \eqref{eq:definition-of-s}, $s\in S(L)$. In particular, $s\in \GOp(q)(L)$. Thus $u\coloneqq fs^{-1}$ is a proper similitude of multiplier $1$, that is, $u\in\SO(q)(L)$, proving the theorem.
	\end{proof}
	
	\begin{proof}[Proof of Theorem~\ref{thm:main}]
		If $F$ is finite, by Lang's theorem \cite{Lang1956} the pointed set $H^1(F,\Spin(q))$ is trivial. Hence $\ker\alpha_F=H^1(F,Z)$, and the conclusion follows from
		Lemma~\ref{lem:bcm-equivalence}.
		
		Assume now that $F$ is infinite. By
		Theorem~\ref{thm:quadratic-reduction}, it is enough to consider a
		quadratic extension $L/F$. Let $h\in\PGOp(q)(L)$. The exact sequence
		\[
		1\longrightarrow\Gm\longrightarrow\GOp(q)
		\longrightarrow\PGOp(q)\longrightarrow1
		\]
		and Hilbert's theorem~90 give a lift
		$f\in\GOp(q)(L)$ of $h$.
		
		By Theorem~\ref{thm:toral-replacement}, we have $f=us$, where $u\in\SO(q)(L)$ and $s\in S(L)$ for some maximal $F$-torus
		$S\subset\GOp(q)$. Let $T\subset\PGOp(q)$ be the image of $S$, and let $\overline u\in \PGOp(q)(L)$ and $\overline s\in T(L)$ denote the image of $u$ and $s$, respectively, so that $h=\overline u\,\overline s$.
		
		Since $L$ is infinite, Remark~\ref{rem:cayley} gives
		$\SO(q)(L)/R=1$. As morphisms of algebraic groups preserve $R$-equivalence, we have $\overline u\in R\PGOp(q)(L)$. Applying Corollary~\ref{cor:r-torus-corestriction} to the central isogeny \eqref{eq:spin-central-isogeny} and to the factorization $h=\overline u\,\overline s$, we obtain
		\[
		\Cor_{L/F}(\delta_L(h))
		\in \delta_F(\PGOp(q)(F)).
		\]
		Thus condition \textup{(ii)} of
		Lemma~\ref{lem:bcm-equivalence} holds. Hence the norm principle
		holds for $\Omega(q)$ over $L/F$, as desired.
	\end{proof}
	
	We conclude with two consequences of Theorem~\ref{thm:main} for more general reductive
	$F$-groups. Let $G$ be a reductive $F$-group, let $G^{\mathrm{sc}}$ be the simply connected cover of the derived subgroup of $G$, and write
	\begin{equation}\label{eq:g-decomposition}
		G^{\mathrm{sc}}\simeq
		\prod_{i=1}^r R_{F_i/F}(G_i),
	\end{equation}
	where $F_i/F$ are finite separable extensions and the $G_i$ are absolutely simple simply connected $F_i$-groups.
	
	\begin{corollary}\label{cor:reductive-norm-principle}
		Let $F$ be a field of characteristic different from $2$, and let $G$ be
		a reductive $F$-group. Assume that, in the decomposition
		\eqref{eq:g-decomposition}, no $G_i$ is of type $E_6$ or $E_7$, and
		that, for every $G_i$ of type $D_n$ for some $n\geq4$, there exists a
		nondegenerate quadratic form $q_i$ over $F_i$ of dimension $2n$ such
		that $G_i\simeq\Spin(q_i)$. Then the norm principle holds for $G$ over
		every finite separable extension of $F$.
	\end{corollary}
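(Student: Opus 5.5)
The plan is to invoke the Barquero--Merkurjev criterion for reductive groups and use Theorem~\ref{thm:main} to handle the type $D_n$ factors. By \cite{BM00}, the norm principle for a reductive group $G$ over $L/F$ reduces to the simply connected cover $G^{\mathrm{sc}}$, and there to each simple factor. Concretely, a homomorphism $G\to T$ factors through the torus $G/[G,G]$. The obstruction to the norm principle is then governed by the center of $G^{\mathrm{sc}}$, namely by corestriction of kernels of $H^1(-,Z(G^{\mathrm{sc}}))\to H^1(-,G^{\mathrm{sc}})$, in the same spirit as Lemma~\ref{lem:bcm-equivalence}(iii). Since everything is compatible with products, it suffices to verify, for each factor $R_{F_i/F}(G_i)$, that the norm principle holds for an appropriate reductive envelope of it.

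First step: use Shapiro's lemma to pass from $R_{F_i/F}(G_i)$ over $F$ to $G_i$ over $F_i$, with $L$ replaced by the factors of $L\otimes_F F_i$. Corestriction is compatible with Shapiro's isomorphism, so the condition $\Cor(\ker\alpha)\subset\ker\alpha$ transfers to the extensions $L'/F_i$ arising as field factors of $L\otimes_F F_i$.

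Second step: for $G_i$ not of type $D_n$ ($n\geq4$), $E_6$ or $E_7$, the required statement is exactly what Barquero--Merkurjev prove \cite[Theorem~1.1]{BM00}. For $G_i\simeq\Spin(q_i)$ with $\dim q_i=2n$, the center is $Z$ as in \eqref{eq:spin-central-isogeny}. The needed inclusion $\Cor_{L'/F_i}(\ker\alpha_{L'})\subset\ker\alpha_{F_i}$ is condition (iii) of Lemma~\ref{lem:bcm-equivalence}, which is equivalent to the norm principle for $\Omega(q_i)$. That holds by Theorem~\ref{thm:main}, since $\operatorname{char}F_i\neq2$ and $2n\geq8$.

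The main obstacle is the bookkeeping in the BM reduction. Their argument shows that the norm principle for $G$ follows from a statement about an envelope of $G^{\mathrm{sc}}$ whose central torus kills the obstruction. One must check that, for a product of factors, the obstruction splits factorwise, and that for spin factors the envelope may be taken to be $\Omega(q_i)$ (compare \cite[\S2.1]{BCM2019}). I would cite their reduction directly, namely that the norm principle holds for $G$ if it holds for each $R_{F_i/F}(G_i)$ in the envelope sense. I would then invoke the Weil restriction compatibility, with Theorem~\ref{thm:main} supplying the only previously missing cases.
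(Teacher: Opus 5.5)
Your proposal is correct and follows essentially the same route as the paper: the paper's proof handles each type $D_n$ factor through its envelope $\Omega(q_i)$, where Theorem~\ref{thm:main} supplies the norm principle, uses \cite[Theorem~1.1]{BM00} for all remaining factors, and concludes with the Barquero--Merkurjev reduction \cite[Proposition~5.2]{BM00}. Your explicit Shapiro's-lemma step for the Weil restrictions and your phrasing via condition (iii) of Lemma~\ref{lem:bcm-equivalence} only spell out bookkeeping that the paper leaves to that cited proposition.
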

	
	\begin{proof}
		For the factors of type $D_n$, the group $\Omega(q_i)$ is an envelope of
		$\Spin(q_i)$, and the norm principle for $\Omega(q_i)$ follows from
		Theorem~\ref{thm:main}. For all the remaining factors, it follows from \cite[Theorem~1.1]{BM00}. Now apply \cite[Proposition~5.2]{BM00}.
	\end{proof}
	
	Thus Theorem~\ref{thm:main} extends the theorem of
	Barquero--Merkurjev \cite[Theorem~1.1]{BM00} by allowing type $D_n$
	factors arising from arbitrary nondegenerate quadratic forms.
	
	We also obtain new cases of Serre's injectivity question \cite[p.~233, Question~2]{Serre1995}; see also \cite[Question~1.1]{Bhaskhar2016}.
	
	\begin{corollary}\label{cor:serre-injectivity}
		Let $F$ be a field of characteristic different from $2$, and let $G$ be
		a connected reductive $F$-group. Assume, in the decomposition
		\eqref{eq:g-decomposition}, that every $G_i$ is of classical type
		$A_n$, $B_n$, $C_n$, or $D_n$, and that every factor of type $D_n$ is
		isomorphic to $\Spin(q_i)$ for a nondegenerate quadratic form $q_i$
		over $F_i$. Given finite extensions $L_1/F,\dots,L_m/F$ of collectively coprime
		degrees, the kernel of
		\[
		H^1(F,G)\longrightarrow \prod_{j=1}^m H^1(L_j,G)
		\]
		is trivial. In particular, every $G$-torsor over $F$ admitting a
		zero-cycle of degree $1$ has an $F$-rational point.
	\end{corollary}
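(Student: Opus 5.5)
We must prove Corollary~\ref{cor:serre-injectivity}. The plan is to reduce Serre's injectivity question to the norm principle, which is now available for all factors by Corollary~\ref{cor:reductive-norm-principle}, and to the known injectivity for simply connected groups of classical type.

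First we reduce to the case where $G$ is semisimple simply connected, or more precisely to a suitable envelope. Choose a central extension $1\to T_0\to \widetilde G\to G\to 1$ (a z-extension, or rather an envelope in the sense of Barquero--Merkurjev), where $T_0$ is a quasi-trivial torus and $\widetilde G$ has derived subgroup $G^{\mathrm{sc}}$. Because $H^1(K,T_0)=0$ for every field $K$, the map $H^1(K,\widetilde G)\to H^1(K,G)$ is surjective and compatible with restriction. Consequently, a class in the kernel for $G$ lifts to $\widetilde G$, with the lift becoming trivial over each $L_j$ only up to the image of the connecting map $G(L_j)\to H^1(L_j,T_0)$, which vanishes. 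So it suffices to treat $\widetilde G$. Writing $\widetilde G$ as an extension $1\to G^{\mathrm{sc}}\to\widetilde G\to T\to 1$ with $T$ a torus, take $\xi\in H^1(F,\widetilde G)$ trivial over every $L_j$. Its image in $H^1(F,T)$ is killed by restriction to each $L_j$, so by corestriction it is killed by every $[L_j:F]$. The degrees are coprime, so the image is trivial, and $\xi$ comes from $H^1(F,G^{\mathrm{sc}})$ up to the twisting action of $T(F)$.

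The key step is to control this ambiguity. Fix $\eta\in H^1(F,G^{\mathrm{sc}})$ mapping to $\xi$. Over $L_j$, the restriction $\eta_{L_j}$ lies in the orbit of the trivial class under $T(L_j)/\varphi(\widetilde G(L_j))$, where $\varphi\colon\widetilde G\to T$. Write $\eta_{L_j}=t_j\cdot 1$. We would like to replace $\eta$ by a class that is trivial over each $L_j$. By Corollary~\ref{cor:reductive-norm-principle}, $N_{L_j/F}(t_j)\in\varphi(\widetilde G(F))$, and the coprimality lets us form an integral combination $\sum_j a_j[L_j:F]=1$. Using these, we adjust $\eta$ within its fibre by an element of $T(F)$ chosen so that the modified class restricts trivially to every $L_j$ in $H^1(L_j,G^{\mathrm{sc}})$. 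This is exactly the mechanism of \cite{Bhaskhar2016}, where the norm principle is the stated hypothesis. We expect this step to be the main technical obstacle. The twisting action is not a group action on a group, so one has to argue via the abelian quotient $T(F)/\varphi(\widetilde G(F))$, as Bhaskhar does.

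We then conclude using the product decomposition \eqref{eq:g-decomposition}. By Shapiro's lemma, $H^1(F,R_{F_i/F}G_i)=H^1(F_i,G_i)$, and the $F_i\otimes L_j$ decompose into fields whose degrees over $F_i$ are still collectively coprime. This reduces the question to injectivity for absolutely simple simply connected groups of types $A_n,B_n,C_n,D_n$ (with type $D$ being $\Spin(q_i)$). For these groups the result is classical: Springer's theorem and its hermitian analogues (Bayer--Lenstra) handle types $B,C,D$ and unitary $A$, and Severi--Brauer index arguments handle inner type $A$. These cases are known, e.g.\ \cite{Bhaskhar2016}. Finally, the statement about zero-cycles of degree one is the special case where the $L_j$ are residue fields of the points of the cycle.
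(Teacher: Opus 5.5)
Your outline has a genuine gap at the adjustment step, which you flag yourself as the main obstacle. The paper's own proof simply cites Bhaskhar's reduction with Corollary~\ref{cor:reductive-norm-principle} plugged in, but the mechanism you describe is not one that works.

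\textbf{The adjustment step.} Corollary~\ref{cor:reductive-norm-principle} gives $N_{L_j/F}(\varphi(\widetilde G(L_j)))\subset\varphi(\widetilde G(F))$. Your $t_j$ is an arbitrary element of $T(L_j)$ with $\eta_{L_j}=t_j\cdot 1$. It lies in $\varphi(\widetilde G(L_j))$ only if $\eta_{L_j}$ is already trivial, so nothing is known about $N_{L_j/F}(t_j)$.

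More fundamentally, making $t\cdot\eta$ trivial over every $L_j$ needs one $t\in T(F)$ with $t\,t_j\in\varphi(\widetilde G(L_j))$ for all $j$ simultaneously. That is a condition over each $L_j$, whereas norms only give information over $F$. In a commutative setting you would close the loop with $\partial_F(N_{L_j/F}(t_j))=\Cor_{L_j/F}(\eta_{L_j})=[L_j:F]\,\eta$. But $H^1(F,G^{\mathrm{sc}})$ is only a pointed set with no corestriction, so nothing relates $\partial_F(\prod_j N_{L_j/F}(t_j)^{-a_j})$ to $\eta$.

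Injectivity for $G^{\mathrm{sc}}$ is the wrong input for going \emph{up} to $\widetilde G$. The norm principle naturally transfers injectivity downwards, e.g.\ from $\Gamma^+(q)$ to $\Spin(q)$ via Knebusch.

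\textbf{A smaller error.} $H^1(K,T_0)=0$ makes $H^1(K,\widetilde G)\to H^1(K,G)$ injective, not surjective; $\mathrm{GL}_n\to\mathrm{PGL}_n$ is a counterexample. A kernel class does lift, but the reason is that its obstruction in $H^2(F,T_0)$ is killed by every $[L_j:F]$.

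\textbf{A route that works.} Pass through the adjoint group, so that corestriction is only applied to an abelian group.
\begin{enumerate}
\item Use $1\to Z\to\widetilde G\to G^{\mathrm{ad}}\to 1$ with $Z=Z(\widetilde G)$. The connecting maps $\delta_K$ are homomorphisms because $Z$ is central.
\item If $\xi\in H^1(F,\widetilde G)$ dies over every $L_j$, so does its image in $H^1(F,G^{\mathrm{ad}})$. That image is then trivial by injectivity for adjoint classical groups.
\item Hence $\xi$ comes from some $z\in H^1(F,Z)$ with $z_{L_j}\in\delta_{L_j}(G^{\mathrm{ad}}(L_j))$ for all $j$.
\item Then $[L_j:F]\,z=\Cor_{L_j/F}(z_{L_j})\in\delta_F(G^{\mathrm{ad}}(F))$ by the cohomological form of the norm principle, as in Lemma~\ref{lem:bcm-equivalence}(ii). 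For general $Z$, get it from Corollary~\ref{cor:reductive-norm-principle} applied to $(\widetilde G\times P)/Z$, for an embedding of $Z$ into a quasi-trivial torus $P$.
\item Coprimality gives $z\in\delta_F(G^{\mathrm{ad}}(F))$, i.e.\ $\xi=1$.
\end{enumerate}
This is where Theorem~\ref{thm:main} enters for the $D_n$ factors. The decisive inputs are injectivity for $G^{\mathrm{ad}}$ and corestriction on the abelian group $H^1(F,Z)$, not injectivity for $G^{\mathrm{sc}}$.
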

	
	\begin{proof}
		Apply the reduction of \cite[\S\S2--3]{Bhaskhar2016}. In the proof of
		\cite[Theorem~1.2]{Bhaskhar2016}, replace the norm
		principle of Barquero--Merkurjev by Corollary~\ref{cor:reductive-norm-principle}. This gives a
		positive answer to Serre's injectivity question for $G$, and the zero-cycle
		formulation follows as in \cite[\S2.1]{Bhaskhar2016}.
	\end{proof}

\end{document}